\documentclass{dmgt}


%

\usepackage{amsmath}
\usepackage{algorithm}
\usepackage{algorithmic}

\usepackage{tikz}
\usetikzlibrary{shapes,shadows,calc}
\usepgflibrary{arrows}
\usetikzlibrary{arrows, decorations.markings, calc, fadings, decorations.pathreplacing, patterns, decorations.pathmorphing, positioning,fit,petri,backgrounds}
\tikzset{%
     vert/.style = {rectangle, 
    path picture={\draw[line width=0.4mm]  (0,-0.07) -- (0,0.07); }, },
    }
\tikzset{%
    vertg/.style = {rectangle, 
    path picture={\draw[line width=0.4mm, gray]  (0,-0.07) -- (0,0.07); }, },
    }
\tikzset{%
    vertl/.style = {rectangle, 
    path picture={\draw[line width=0.4mm, lightgray]  (0,-0.07) -- (0,0.07); }, },
    }

\tikzset{nod2/.style={circle,draw=black,fill=black,inner sep=2.2pt}}
\tikzset{nodp/.style={circle,draw=black,pattern=north east lines,, pattern color=gray,inner sep=3pt}}
\tikzset{nodpale/.style={circle,draw=black,fill=lightgray,inner sep=3pt}}
\tikzset{nodel/.style={circle,draw=black,inner sep=2.2pt}}

\usepackage{enumerate}
\usepackage{enumitem}
\usepackage{subfigure}
\usepackage{float}
\theoremstyle{plain}
\newtheorem{fact}[theorem]{Fact}
\newtheorem*{claim*}{Claim}

\newauthor{%
Mehmet Akif Yetim}{%
M. A. Yetim}{%
S\"{u}leyman Demirel University\\
Department of Mathematics\\
Isparta, 32260, Turkey}[%
akifyetim@sdu.edu.tr]

\title{\textit{L}(\boldmath$\MakeLowercase{p},\MakeLowercase{q}$)-labeling of graphs with interval representations}[$L(\MakeLowercase{p,q})$-labeling of graphs with interval representations]

\keywords{$L(p,q)$-labeling, channel assignment, interval representation, square graph, interval graph, interval $k$-graph, permutation graph, circular-arc graph, cointerval graph, interval order, chromatic number}

\classnbr{05C12, 05C15, 05C62, 05C78, 05C85, 06A07}

\begin{document}

\begin{abstract}
We provide upper bounds on the $L(p,q)$-labeling number of graphs which have interval (or circular-arc) representations via simple greedy algorithms. We prove that there exists an $L(p,q)$-labeling with a span at most $\max\{2(p+q-1)\Delta-4q+2, (2p-1)\mu+(2q-1)\Delta-2q+1\}$ for interval $k$-graphs, $\max\{p,q\}\Delta$ for interval graphs, $3\max\{p,q\}\Delta+p$ for circular-arc graphs, $2(p+q-1)\Delta-2q+1$ for permutation graphs and $(2p-1)\Delta+(2q-1)(\mu-1)$ for cointerval graphs.  In particular, these improve existing bounds on $L(p,q)$-labeling of interval graphs and $L(2,1)$-labeling of permutation graphs. Furthermore, we provide upper bounds on the coloring of the squares of aforementioned classes.
\end{abstract}

\vspace*{-5mm}
\section{Introduction}\label{sect:intro}
All graphs considered in this paper are simple. If $G$ is a graph, $V(G)$ and $E(G)$ (or simply $V$ and $E$) denote the vertex and edge set of $G$, respectively. For a vertex $v\in V$, the set $N_G(v)=\{u: \, uv\in E\}$ is called the \emph{open neighborhood} of the vertex $v$, while the set $N_G[v]=N_G(v)\cup \{v\}$ is the \emph{closed neighborhood} of $v$. The cardinality of $N_G(v)$ is the \emph{degree} of $v$, denoted by $d_G(v)$. A vertex $v$ of $G$ is called a \emph{leaf} if $d_G(v)=1$, otherwise it is called \emph{non-leaf}. The \emph{clique number} and the \emph{maximum} degree of $G$ are denoted by $\omega(G)$  and $\Delta(G)$, respectively. When the graph $G$ is clear in the context, we simply abbreviate them to $\omega$ and $\Delta$. The complement $\overline{G}$ of a graph $G=(V,E)$ is the graph on $V$ such that two vertices are adjacent in $\overline{G}$ if and only if they are not adjacent on $G$.

We let $m_G(u,v):=|N_G(u)\cap N_G(v)|$ for any two distinct vertices $u,v\in V$, and define the \emph{multiplicity} of $G$ by $\mu(G)=\max \{m_G(u,v): \, u,v\in V\}$. Note that the parameter $\mu(G)$ is firstly introduced in \cite{civan2019} and proved to be useful especially when the gap between $\mu$ and $\Delta$ is large.

For non-negative integers $p$ and $q$, an $L(p,q)$-\emph{labeling} of a  graph $G$ is a labeling of its vertices with non-negative integers such that adjacent vertices receive labels with difference at least $p$ and the vertices at distance $2$ from each other get labels with difference at least $q$. The \emph{span} of an $L(p,q)$-labeling of $G$ is the difference between the smallest and the largest label used. $L(p,q)$-\emph{labeling number} $\lambda_{p,q}(G)$ of $G$ is the least integer $k$ such that $G$ admits an $L(p,q)$-labeling with span $k$.

The problem of $L(p,q)$-labeling is a generalization of the problem of $L(2,1)$-labeling introduced by Griggs and Yeh in \cite{Yeh} and mainly motivated by its application to radio channel assignment problem. Radio channel assignment problem basically concerns about finding a feasible assignment of frequencies to radio transmitters in order to avoid the signal interference, when the transmitters are close to each other.

In \cite{Yeh}, Griggs and Yeh proved that $\lambda_{2,1}(G)\leq \Delta^2+2\Delta$ and conjectured that $\lambda_{2,1}(G)\leq \Delta^2$ holds for every graph $G$. Although the conjecture has been confirmed for various graph classes, it is widely open in general and it became a main motivation for the most of the recent studies on the subject. The current best upper bound on $\lambda_{2,1}(G)$ is $\Delta^2+\Delta-2$ which is due to Gonçalves \cite{goncalves2008}. On the other hand, Havet \textit{et al.} \cite{havet2008} verified the conjecture asymptotically.  Griggs and Yeh's conjecture has been confirmed for many graph classes including paths, cycles, wheels, complete $k$-partite graphs \cite{Yeh}; trees \cite{chang1996, Yeh}, cographs, OSF-chordal, SF-chordal \cite{chang1996}, regular tiling \cite{bertossi2000}, chordal, unit interval \cite{sakai1994}, outerplanar, split, permutation \cite{Bodlaender}, cocomparability \cite{Calamoneri-N}, Hamiltonian cubic \cite{kang2008}, weakly chordal \cite{Cerioli} and generalized Petersen \cite{huang2012} graphs. We refer reader to \cite{Calamoneri-S} for a recent up to date survey.

The existence of a special vertex/edge ordering in graphs is used for many algorithmic purposes, including graph coloring problems. In particular, an appropriate choice of an ordering of the vertices of a graph provides optimal results when it is an input of a greedy algorithm. For instance, Panda and Goel \cite{Panda} used vertex ordering characterizations to obtain upper bounds for $L(2,1)$-labeling of dually chordal graphs and strongly orderable graphs by executing some greedy algorithms. In \cite{Calamoneri-N}, Calamoneri \textit{et al.} provided upper bounds on the $L(p,q)$-labeling of cocomparability, interval and unit interval graphs by making use of their ordering structure in not a greedy but a more direct way of labeling. In this paper, we use simple and natural greedy algorithms to produce $L(p,q)$-labeling of certain classes of graphs with interval representations. We perform constant approximation algorithms which are slightly modified versions of those proposed in \cite{Panda}. As an input of these algorithms, we use orderings of the vertices of these graphs supplied from their interval representation. 

Our main result is summarized in the following:
\begin{theorem}\label{mainthm} Let $G$ be a graph. Then,
\begin{equation*}
\lambda_{p,q}(G)\leq \begin{cases}
2(p+q-1)\Delta-2q+1,   & \textrm{if }\; G\textrm{ is an interval $k$-graph,}\\
\max\{p,q\}\Delta,   & \textrm{if }\; G\textrm{ is an interval graph,}\\
3\max\{p,q\}\Delta+p,   & \textrm{if }\; G\textrm{ is a circular-arc graph,}\\
2(p+q-1)\Delta-2q+1,   & \textrm{if }\; G\textrm{ is a permutation graph,}\\
(2p-1)\Delta+(2q-1)(\mu-1),   & \textrm{if }\; G\textrm{ is a cointerval graph.}
\end{cases}
\end{equation*} 
\end{theorem}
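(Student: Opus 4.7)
The plan is to handle all five cases of Theorem~\ref{mainthm} by a common First-Fit greedy framework. For each class I will single out a vertex ordering $v_1, v_2, \dots, v_n$ of $V(G)$ that is read off the appropriate geometric representation, and then label the vertices in this order by assigning to each $v_i$ the smallest non-negative integer that satisfies the $L(p,q)$-constraints against all previously labeled vertices. A routine counting argument shows that this greedy labeling uses span at most
\[
\max_{i} \Bigl( (2p-1)\,|N^-(v_i)| + (2q-1)\,|D_2^-(v_i)\setminus N^-(v_i)| \Bigr),
\]
where $N^-(v_i)$ is the set of already-labeled neighbors of $v_i$ and $D_2^-(v_i)$ is the set of already-labeled vertices at graph distance exactly two from $v_i$. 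The proof therefore reduces to bounding these two quantities class by class.

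For interval graphs I would refine the greedy step, restricting the allowed labels to the multiples of $\max\{p,q\}$. With the non-decreasing right-endpoint ordering, $N^-(v_i)\cup\{v_i\}$ is a clique, so the labels in $N^-(v_i)$ are pairwise distinct; one then argues from the interval representation that the collection $N^-(v_i)\cup D_2^-(v_i)$ uses at most $\Delta$ distinct labels, which immediately yields $\max\{p,q\}\Delta$. For circular-arc graphs I would isolate one arc $A$, run the interval-graph routine on $G-A$, and then re-insert $A$: in the worst case $A$ participates in the wrap-around clique of size $\omega$, and an extra span of $p\omega$ absorbs the forbidden labels contributed by that clique.

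For permutation, interval $k$-graph and cointerval graphs I would apply pure First-Fit with the displayed estimate. For permutation graphs, using one of the two linear orders of the permutation model as the vertex order, the inversion structure yields $|D_2^-(v_i)\setminus N^-(v_i)|\le \Delta-1$; combined with $|N^-(v_i)|\le \Delta$, the estimate simplifies to $(2p-1)\Delta+(2q-1)(\Delta-1)=2(p+q-1)\Delta-2q+1$. The same $\Delta-1$ distance-two bound, derived from the colored-interval representation, will handle interval $k$-graphs. For cointerval graphs, ordering by left endpoint of the underlying disjoint-interval representation, every left distance-two non-neighbor of $v_i$ can be shown to arise from a single common neighbor in $N^-(v_i)$, so $|D_2^-(v_i)\setminus N^-(v_i)|\le \mu-1$ and the bound $(2p-1)\Delta+(2q-1)(\mu-1)$ follows.

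The main obstacle I anticipate is not the greedy shell, which is essentially identical across the five classes, but the structural lemma that pins down $|D_2^-(v_i)\setminus N^-(v_i)|$ in each case. The cointerval step is the most delicate, because replacing the ubiquitous $\Delta$ by $\mu$ hinges on funneling all distance-two non-neighbors through a single ``witness'' vertex of $N^-(v_i)$, and this requires a careful case analysis of the relative positions of the disjoint intervals. A secondary technicality is verifying that the step-size-$\max\{p,q\}$ refinement used for interval and circular-arc graphs simultaneously respects both the distance-one and the distance-two constraints of the $L(p,q)$-labeling.
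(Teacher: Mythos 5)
Your greedy shell, the reduction to bounding labeled first and second neighbors, and the $\max\{p,q\}$-multiples refinement for interval graphs are exactly the paper's strategy (one direction slip: the vertices must be processed by \emph{decreasing} right endpoint; with the non-decreasing order the already-labeled neighbors of $v_i$ need not form a clique and the ``at most $\Delta$ labels among $N^-(v_i)\cup D_2^-(v_i)$'' fact fails). The permutation claim you assert is true and is essentially the paper's nested-neighborhood lemma. But two of your structural claims have genuine gaps. For interval $k$-graphs the bound $|D_2^-(v_i)\setminus N^-(v_i)|\le \Delta-1$ is simply false: if the extremal vertex $v_1$ (minimum right endpoint) has neighbors in two different color classes, no single neighbor is adjacent to all of its second neighbors. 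Take $v_1\in C_1$ with neighbors $a\in C_2$ and $b\in C_3$ and many further intervals of class $C_1$ meeting $a$ or $b$ but not $I_{v_1}$: then $v_1$ has up to $d(a)+d(b)-4\approx 2\Delta-4$ second neighbors, exceeding $\Delta-1$ once $\Delta\ge 4$. The paper handles this with a two-case analysis: if all neighbors of $v_1$ lie in one class, then $N(v_1)$ sits inside a common neighborhood, so $d(v_1)\le\mu$ and the count is $(2p-1)\mu+(2q-1)(\Delta-1)$; if they meet at least two classes, one needs two witnesses $c,v$ and the count $(2p-1)s+(2q-1)\bigl(d(c)+d(v)-s-2\bigr)$ with $s=d(v_1)$, which stays below $2(p+q-1)\Delta-4q+2$ only because the inflated second neighborhood is offset by the $s$-term. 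Your single ``$\Delta-1$'' estimate cannot be repaired without this trade-off.

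The cointerval case has the same kind of gap but in the opposite direction: funneling all distance-two vertices through a \emph{single} witness in $N^-(v_i)$ only bounds them by $d(\text{witness})-1\le\Delta-1$, which proves $2(p+q-1)\Delta-2q+1$ but not the claimed $(2p-1)\Delta+(2q-1)(\mu-1)$; to bring in $\mu$ you must trap the second neighbors in the common neighborhood of a \emph{pair}. The paper does this via the interval-order model (adjacency $=$ disjointness of intervals): the last vertex $v_1$ is a minimal element adjacent to every non-minimal element, so its second neighbors lie in $\mathrm{Min}(P)\setminus\{v_1\}$; choosing the minimal element $w$ with largest right endpoint and two of its neighbors $a,b$, every minimal element precedes both, so $\mathrm{Min}(P)\subseteq N(a)\cap N(b)$ and the second neighbors number at most $m(a,b)-1\le\mu-1$. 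Finally, your circular-arc step does not work as stated: deleting one arc $A$ does not leave an interval graph (the remaining arcs can still cover the circle), and even if it did, re-inserting a single vertex of degree up to $\Delta$ is not paid for by $p\omega$ extra span. The correct decomposition (used in the paper, following Calamoneri et al.) removes the entire clique $C$ of arcs through one chosen point, $|C|\le\omega$, labels the interval graph on $V\setminus C$ with span $\max\{p,q\}\Delta$, and assigns $C$ the next $p\omega$ labels.
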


We divide the proof of Theorem~\ref{mainthm} into several steps by providing a detailed analysis of each graph class in the subsequent sections. Meantime, we recall that Ceroli and Posner~\cite{Cerioli} proved the Griggs and Yeh's conjecture for weakly chordal graphs by showing that $\lambda_{2,1}(G)\leq \Delta^2-\Delta+2$. Our main theorem refines this result by linearizing upper bounds in terms of $\Delta$ for interval $k$-graphs, interval, permutation and cointerval graphs, as these graphs are contained in weakly chordal graphs. Among these graphs, we provide first known linear upper bounds on $\lambda_{2,1}$ for interval $k$-graphs and cointerval graphs. To be more specific, we show that $\lambda_{2,1}(G)\leq \max\{4\Delta-2, \Delta+3\mu-1\}$ when $G$ is an interval $k$-graph and $\lambda_{2,1}(G)\leq 3\Delta+\mu-1$ if $G$ is a cointerval graph. Observe that each of these upper bounds can not exceed $4\Delta-1$. On the other hand, in the case of permutation graphs, we improve the best known upper bound $\max\{4\Delta-2, \, 5\Delta-8\}$ of Paul \textit{et al.}~\cite{Paul} to $4\Delta-1$. Note that $4\Delta-1\leq \max\{4\Delta-2, \, 5\Delta-8\}=5\Delta-8$ when $\Delta\geq 7$. 

In the general case, our main result improves the best known upper bound on $\lambda_{p,q}$ for interval graphs. We recall that Calamoneri \textit{et al.} \cite{Calamoneri-N} show that $\lambda_{p,q}(G)\linebreak \leq \max\{p,2q\}\Delta$ when $G$ is an interval graph. We also provide an upper bound on $\lambda_{p,q}$ for circular-arc graphs by making use of our result for interval graphs. Furthermore, we point out that our main result provides first known upper bounds on the number $\lambda_{p,q}$ for interval $k$-graphs, permutation graphs and cointerval graphs. Finally, we obtain as a by-product tight upper bounds on the chromatic number of squares of graphs we consider.

The rest of the paper is organized as follows: In Section~\ref{pre}, we give preliminary definitions and notations and provide a greedy $L(p,q)$-labeling algorithm which is needed in sequel. In Section~\ref{intk}, we draw attention to a generalization of interval graphs so-called interval $k$-graphs and prove there the corresponding claim of our main theorem. In Section~\ref{int}, we use an improved greedy labeling algorithm to find an $L(p,q)$-labeling of interval graphs and prove Theorem~\ref{mainthm} for interval and circular-arc graphs. Section~\ref{perm} deals with $L(p,q)$-labeling of permutation graphs by implementing the algorithm given in Section~\ref{pre}. In the last section, we observe the equivalence between cointerval graphs and comparability graphs of interval orders, and complete the proof of our main result.


\section{Preliminaries}\label{pre}
We first recall some general notions and notations needed throughout the paper, and repeat some of the definitions mentioned in the introduction more formally.

For any two vertices $u,v\in V$, the \emph{distance} $d_G(u,v)$ between $u$ and $v$ is the length of a shortest path between $u$ and $v$. We call the vertex $u$ a \emph{(first) neighbor} of $v$ if $uv\in E$ and a \emph{second neighbor} if $d_G(u,v)=2$. The square $G^2$ of the graph $G=(V,E)$ is the graph with the vertex set $V$ such that $u$ and $v$ are adjacent in $G^2$ if and only if $d_G(u,v)\leq 2$. A $k$-coloring of the graph $G$ is a labeling of its vertices with $k$ colors such that adjacent vertices receive distinct colors. $\chi(G)$ denotes the chromatic number of $G$, which is the least integer $k$ such that $G$ admits a $k$-coloring. 

If $I$ is a closed interval on the real line, we denote by $l(I)$ and $r(I)$, the left and the right endpoint of $I$, respectively. If a vertex $v\in V$ is assigned to a closed interval $I_v$ on the real line, we use notations $l(v):=l(I_v)$ and $r(v):=r(I_v)$ for the left and right endpoint of the corresponding interval. 

At this point, it is worth noting that there are three variations of the notion $L(p,q)$-labeling of graphs in the literature (see \cite{Calamoneri-S, Calamoneri-N}). Let us denote by $\lambda_{p,q}^i(G)$, the $L^i(p,q)$-labeling number of $G$, for each $i\in \{ 1,2,3\}$, and summarize them in the following way: For each $i$, an $L^i(p,q)$-labeling of a graph $G$ is a function $f_i:V\rightarrow \mathbb{Z}^+\cup\{0\}$ such that
\begin{equation*}
|f_1(u)-f_1(v)|\geq\begin{cases}
p,   & \textrm{if }\; uv\in E \\
q,   & \textrm{if }\; d_G(u,v)=2
\end{cases}
\end{equation*} 
\begin{equation*}
|f_2(u)-f_2(v)|\geq\begin{cases}
p,   & \textrm{if }\; uv\in E \\
q,   & \textrm{if }\; m_G(u,v)\geq 1
\end{cases}
\end{equation*} 
\begin{equation*}
|f_3(u)-f_3(v)|\geq\begin{cases}
p,   & \textrm{if }\; uv\in E \\
q,   & \textrm{if }\; d_G(u,v)\leq 2
\end{cases}
\end{equation*} 

Notice that $\lambda_{p,q}^1(G)\leq \lambda_{p,q}^2(G)\leq \lambda_{p,q}^3(G)$,  since any $L^i(p,q)$-labeling of $G$ yields an $L^j(p,q)$-labeling of $G$ for $1\leq j<i\leq3$. When $p\geq q$, these three definitions coincide, as they varies otherwise. Note that the condition for adjacent vertices in the third definition becomes redundant when $p<q$. It is also worth noting that there is a common and natural assumption that $p\geq q$, stemming from the assignment of frequencies. Thus we prefer to consider the common definition and set $L(p,q)=L^1(p,q)$. For graph classes under consideration,  it is also possible to produce an $L^2(p,q)$ or $L^3(p,q)$-labeling with larger spans, by modifying the algorithms we use. We also note that the definition of $L^3(p,q)$-labeling is used for interval and circular-arc graphs in \cite{Calamoneri-N}. Fortunately, our algorithm for interval graphs produces that kind of labeling as well. So we do not distinguish these two definitions of $L^1(p,q)$ and $L^3(p,q)$ in the case of interval and circular-arc graphs. 

We remark that an $L(1,1)$-labeling of a graph $G$ naturally provides a proper coloring of $G^2$. In the rest of our work, we will assume $p,q\geq 1$. In other words, we do not consider either of the cases $p=0$ or $q=0$, as $q=0$ corresponds to the proper vertex coloring problem. Since $L(p,q)$-labeling allows a vertex to be assigned with "$0$", we have the obvious equality $\chi(G^2)=\lambda_{1,1}(G)+1$.
Observe further that the lower bound $\lambda_{p,q}(G)\geq \Delta$ trivially holds for $p,q\geq 1$.

We now introduce an $L(p,q)$-labeling algorithm which is a slightly modified version of the one proposed in \cite{panda2011}. Given a linear ordering of the vertices of a graph, Algorithm~\ref{algo:alg1} labels the vertices one by one with the smallest label available such that current vertex will have a label with difference at least $p$ apart from the labels of its prelabeled neighbors and with the difference at least $q$ apart from the labels of already labeled vertices at distance two.

\begin{algorithm}[htb]
\begin{algorithmic}
\STATE{\hspace*{-3mm}\textbf{Input:} a graph $G$ with an ordering $\sigma=(v_1,v_2,\ldots,v_n)$ of its vertices;}
\STATE{\hspace*{-4.3mm} \textbf{Output:} an $L(p,q)$-labeling $f$ of $G$.}
\STATE \hspace*{-4mm} set $S:=\emptyset$,
\STATE { \hspace*{-4mm} \textbf{for all} $i$ \textbf{from} $n$ \textbf{to} $1$} 
\STATE{ Find the smallest non-negative integer $j$ such that $j\notin \{f(v)-p+1, \ldots, f(v)+p-1:  \, v\in N_G(v_i)\cap S\}\cup \{f(u)-q+1,\ldots, f(u)+q-1: \; u\in S \text{ and } d_G(v_i,u)=2\} $, }
\STATE{$f(v_i):=j$,}
\STATE{ $S:=S\cup \{v_i\}$,}
\STATE{\hspace*{-4mm} \textbf{end for}}
\STATE{\hspace*{-4mm} Output($f$)}
\end{algorithmic}
\caption{Greedy $L(p,q)$-labeling $(G,\sigma)$}
\label{algo:alg1}
\end{algorithm}

\begin{fact}
Algorithm~\ref{algo:alg1} yields an $L(p,q)$-labeling of $G$.
\end{fact}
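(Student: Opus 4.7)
The plan is a straightforward induction on the iteration index $i$. I would prove that after the $i$-th pass through the \textbf{for}-loop, the partial function $f$ restricted to $S_i:=\{v_1,\ldots,v_i\}$ is a valid $L(p,q)$-labeling of the induced subgraph $G[S_i]$. The base case $i=1$ is immediate, since a single vertex carries no adjacency or distance-$2$ constraint.

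For the inductive step, I would assume $f|_{S_{i-1}}$ is a valid $L(p,q)$-labeling of $G[S_{i-1}]$. Any pair of vertices already in $S_{i-1}$ continues to satisfy its constraints after $v_i$ is labeled, so only pairs of the form $(v_i,w)$ with $w\in S_{i-1}$ must be verified. The key observation is that the forbidden set used by the algorithm,
\[
\bigcup_{v\in N_G(v_i)\cap S_{i-1}}\!\!\!\{f(v)-p+1,\ldots,f(v)+p-1\}\;\cup\bigcup_{\substack{u\in S_{i-1}\\ d_G(v_i,u)=2}}\!\!\!\{f(u)-q+1,\ldots,f(u)+q-1\},
\]
is \emph{exactly} the set of integers whose choice as $f(v_i)$ would violate the $L(p,q)$ condition against some prelabeled neighbor or prelabeled second neighbor. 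Hence any $j$ avoiding this set automatically satisfies $|f(v_i)-f(v)|\geq p$ for each $v\in N_G(v_i)\cap S_{i-1}$ and $|f(v_i)-f(u)|\geq q$ for each second neighbor $u\in S_{i-1}$, which together with the inductive hypothesis gives a valid $L(p,q)$-labeling of $G[S_i]$.

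The only remaining point is that a non-negative integer $j$ outside the forbidden set always exists, which is immediate because the forbidden set is a finite union of finite integer intervals and therefore bounded. This is the only thing that even resembles an obstacle, and it is essentially a triviality; the entire argument is a routine unpacking of the definitions, the algorithm's selection rule being designed precisely to enforce the $L(p,q)$ constraints.
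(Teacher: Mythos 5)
The paper records this statement as a Fact with no proof at all, treating it as immediate from the design of the selection rule; your induction is the routine formalization of that, and it is sound in substance, but one step is misphrased in a way worth fixing. Your invariant says $f|_{S_i}$ is an $L(p,q)$-labeling of the \emph{induced subgraph} $G[S_i]$, and your inductive step asserts that pairs inside $S_{i-1}$ ``continue to satisfy their constraints'' once $v_i$ is labeled. Under the induced-subgraph reading that assertion is not automatic: two already-labeled vertices $u,w$ may be nonadjacent and at distance greater than $2$ in $G[S_{i-1}]$, yet become a distance-$2$ pair in $G[S_i]$ through the new common neighbor $v_i$, and the inductive hypothesis about $G[S_{i-1}]$ says nothing about $|f(u)-f(w)|$. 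The pair is in fact safe, but only because the algorithm tests $d_G(v_i,u)=2$ in the \emph{whole} graph $G$: distance $2$ in $G[S_i]$ forces distance $2$ in $G$, so the $q$-separation between $u$ and $w$ was already imposed at the earlier step when the later of the two was labeled. The clean repair is to state the invariant with distances measured in $G$ from the start: for all $u,w\in S_i$, $|f(u)-f(w)|\geq p$ whenever $uw\in E$ and $|f(u)-f(w)|\geq q$ whenever $d_G(u,w)=2$. With that invariant, pairs inside $S_{i-1}$ really are untouched by the arrival of $v_i$, only pairs involving $v_i$ need checking (and the forbidden set is exactly the set of violating labels for those), and at $i=n$ the invariant is verbatim the definition of an $L(p,q)$-labeling of $G$. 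Your closing observation that the forbidden set is finite, so a valid label always exists, is correct and completes the argument.
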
  

\begin{theorem} [\cite{panda2011}]
Algorithm~\ref{algo:alg1} can be implemented to run in $O(\Delta(|V|+|E|))$ time.
\end{theorem}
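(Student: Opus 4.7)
The plan is to bound the work done in each iteration of the \textbf{for}-loop and sum across all iterations. When processing vertex $v_i$, Algorithm~\ref{algo:alg1} must (i) enumerate the already-labeled first neighbors $N_G(v_i)\cap S$, (ii) enumerate the already-labeled second neighbors of $v_i$, (iii) assemble the forbidden label set, and (iv) find the smallest non-negative integer outside it. Assuming the input graph is stored with adjacency lists, step (i) takes $O(d_G(v_i))$ time per iteration, contributing $O(|E|)$ in total.

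For step (ii), I would iterate over each $w\in N_G(v_i)$ and each $u\in N_G(w)$, flagging $u$ with a per-iteration timestamp so that each distance-$2$ vertex of $v_i$ is processed only once. The per-iteration cost is $O\!\left(\sum_{w\in N_G(v_i)} d_G(w)\right)$, and summing over all $v_i$ uses the elementary identity
\begin{equation*}
\sum_{v\in V}\sum_{w\in N_G(v)} d_G(w) \;=\; \sum_{w\in V} d_G(w)^2 \;\leq\; \Delta\sum_{w\in V} d_G(w) \;=\; 2\Delta|E|,
\end{equation*}
so step (ii) contributes $O(\Delta|E|)$ in total. Steps (iii) and (iv) are then charged by the size $k_i$ of the forbidden set, which is bounded by $(2p-1)$ times the count in (i) plus $(2q-1)$ times the count in (ii); hence $\sum_{i} k_i = O(\Delta|E|)$ for constant $p,q$.

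The implementation subtlety sits in step (iv). The key observation is that a set of $k_i$ forbidden integers cannot cover all of $\{0,1,\ldots,k_i\}$, so the smallest admissible label is always at most $k_i$. Therefore I would allocate a bit array of length $k_i+1$, mark the forbidden labels that fall in this range (ignoring negatives and anything larger than $k_i$), and scan left-to-right for the first unmarked entry; this costs $O(k_i)$ per iteration and hence $O(\Delta|E|)$ overall.

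Collecting the contributions, the total running time is $O(|V|+|E|+\Delta|E|) = O(\Delta(|V|+|E|))$, as required. The main obstacle I anticipate is precisely step (iv): a naive search that iterates $j=0,1,2,\ldots$ against a hash set of forbidden labels could, in principle, cost $\Omega(\lambda_{p,q}(G))$ per iteration and ruin the bound. The pigeonhole observation above sidesteps this by guaranteeing a small search range, reducing step (iv) to linear work in the forbidden-set size and matching the bound already paid for by steps (i)--(iii).
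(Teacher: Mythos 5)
Your proof is correct and follows essentially the same route as the source the paper relies on: the paper itself gives no argument for this theorem (it defers to the cited reference), and the standard analysis there is exactly yours — charge the two-hop enumeration by $\sum_{w\in V} d_G(w)^2 \leq 2\Delta|E|$ and use the pigeonhole observation that the smallest admissible label lies in $\{0,\ldots,k_i\}$, so the label search is linear in the forbidden-set size rather than in $\lambda_{p,q}(G)$. Two small caveats worth keeping explicit: the stated bound treats $p$ and $q$ as constants (as you note, and as in the $L(2,1)$ setting of the cited work), and your per-iteration timestamps should also mark $v_i$ and its first neighbors so that a vertex found in the two-hop scan is counted as a second neighbor only when it is at distance exactly $2$.
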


\section{Interval $k$-graphs}\label{intk}

The family of interval $k$-graphs is a relatively new class, firstly introduced in \cite{brown2002} as a generalization of (probe) interval graphs and interval bigraphs. Brown~\cite{brown} provides  a characterization of interval $k$-graphs in terms of consecutive ordering of its complete $r$-partite subgraphs. As there is no further characterization known for this class, the recent works concentrate on the possible characterization of its subclasses such as cocomparability interval $k$-graphs \cite{brown2018} and AT-free interval $k$-graphs \cite{decock2019}.

Recall that interval graphs are the intersection graphs of line segments on a straight line. A graph is called an \emph{interval graph} if its vertices can be assigned to closed intervals on the real line such that two vertices are adjacent if and only if their corresponding intervals intersect. In a similar vein, a bipartite graph is called an \emph{interval bigraph} if its vertices can be assigned to closed intervals on the real line such that two vertices from different parts are adjacent if and only if their corresponding intervals intersect. 

A graph $G$ is an \emph{interval $k$-graph} if each vertex $v\in V$ can be assigned to an ordered pair $(I_v, \kappa(v))$, where $I_v$ is a closed interval on the real line and $\kappa(v)\in \{1,2,\ldots,k\}$ such that $uv\in E$ if and only if $I_u\cap I_v\neq \emptyset$ and $\kappa(u)\neq \kappa(v)$. Such an assignment of a graph $G$ is called an \emph{interval} $k$-\emph{representation} of $G$. We may refer $C_1,\ldots,C_k$ as color classes, where $C_i:=\kappa^{-1}(i)$ for each $1\leq i\leq k$ (see Figure~\ref{fig:exintk}).

\begin{figure}[htb]
\centering     
\subfigure[]{
\begin{tikzpicture}[scale=0.48]
\node [nodp] at (-3,3) (b) [label=left:\scriptsize{$b$}] {};
\node [nodp] at (3,-3) (c)[label=right:\scriptsize{$c$}]{};
\node [nodpale] at (-3,-3) (d) [label=left:\scriptsize{$d$}]{}
	edge [] (b)
	edge [] (c);
\node [nodpale] at (3,3) (e) [label=right:\scriptsize{$e$}]{}
	edge [] (b)
	edge [] (c);
\node [nod2] at (0,0) (a) [label=left:\scriptsize{$a$}]{}
	edge [] (b)
	edge [] (c)
	edge [] (d)
	edge [] (e);			
\end{tikzpicture}
}
\hspace*{.5cm}
\subfigure[]{\
\begin{tikzpicture}[yscale=0.4,xscale=0.5]
\draw [help lines,dotted, ystep=22] (0,0) grid (11,7);
		\node [above] at (5,7) {$I_{a}$};
		\node [below] at (2,5) {$I_{b}$};
		\node [above] at (8.5,4) {$I_c$};
		\node [left] at (2,2) {$I_d$};
		\node [right] at (8,1) {$I_e$};
	    \node [right] at (11.5,7) {$C_1$};
	    \node [right] at (11.5,4.5) {$C_2$};
	    \node [right] at (11.5,1.5) {$C_3$};
		\draw [-] (0,3) -- (12,3);
		\draw [-] (0,6) -- (12,6);
		\draw [line width=1mm] (1,7) -- (9,7); 
			\node [vert] at (1,7) {};
       		 \node [vert] at (9,7) {};
 		\draw [line width=1mm] (2,2) -- (7,2);
			\node [vert] at (2,2) {};
       		 \node [vert] at (7,2) {};
		\draw [line width=1mm] (3,1) -- (8,1);
			\node [vert] at (3,1) {};
       		 \node [vert] at (8,1) {};
		\draw [line width=1mm] (0,5) -- (4,5);
			\node [vert] at (0,5) {};
       		 \node [vert] at (4,5) {};
		\draw [line width=1mm] (6,4) -- (11,4);
			\node [vert] at (6,4) {};
       		 \node [vert] at (11,4) {};
\end{tikzpicture} 
}
\caption{An interval $3$-graph and its interval $3$-representation}
\label{fig:exintk}
\end{figure}
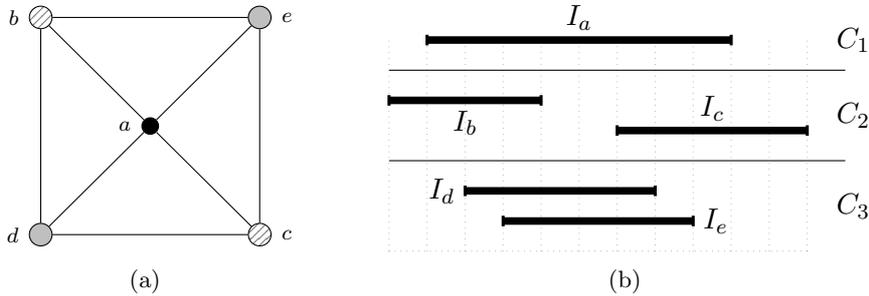

If $G$ is an interval $k$-graph, then $\chi(G)\leq k$, which is also why we will use the word color class (or interval class) for each part of $k$-partition of $G$. Interval $2$-graphs coincide with the class of interval bigraphs. On the other hand, even though every interval graph $G$ is an interval $k$-graph with $k=\chi(G)$, the converse is not true in general.

Next we present the labeling algorithm for the graphs which admit an interval representation.\\

\begin{algorithm}[htb]
\begin{algorithmic}
\STATE{\textbf{Input:} a graph $G$ with an ordering $\sigma=(v_1,v_2,\ldots,v_n)$ of its vertices such that $r(v_1)\leq r(v_2)\leq \ldots \leq r(v_n)$ in the interval representation of $G$;}
\STATE{\textbf{Output:} an $L(p,q)$-labeling $f$ of $G$.}
\STATE set $S:=\emptyset$, 
\STATE set $T:=\emptyset$,
\STATE{\textbf{\#} \it Stage 1: Label the non-leaf vertices}
\FORALL{$i$ \textbf{from} $n$ \textbf{to} $1$ } 
\IF{$d_G(v_i)=1$} 
\STATE {$T:=T\cup \{v_i\}$}
\ELSE
\STATE{ Find the smallest non-negative integer $j$ such that $j\notin \{f(v)-p+1, \ldots, f(v)+p-1:  \, v\in N_G(v_i)\cap S\}\cup \{f(u)-q+1,\ldots, f(u)+q-1: \; u\in S \text{ and } d_G(v_i,u)=2\} $, }
\STATE{$f(v_i):=j$,}
\STATE{ $S:=S\cup \{v_i\}$,}
\ENDIF
\ENDFOR
\STATE{\textbf{\#} \it Stage 2: Label the leaf vertices}
\FORALL{$v_i$ \textbf{in} $T$} 
\STATE{ Find the smallest non-negative integer $j$ such that $j\notin \{f(v)-p+1, \ldots, f(v)+p-1:  \, v\in N_G(v_i)\cap S\}\cup \{f(u)-q+1,\ldots, f(u)+q-1: \; u\in S \text{ and } d_G(v_i,u)=2\} $, }
\STATE{$f(v_i):=j$,}
\STATE{ $S:=S\cup \{v_i\}$,}
\ENDFOR
\STATE{Output($f$)}
\end{algorithmic}
\caption{Improved greedy $L(p,q)$-labeling of $G$ which admits an interval representation}
\label{algo:alg2}
\end{algorithm}

Algorithm~\ref{algo:alg2} consists of two stages. In the first stage, it labels the non-leaf vertices, starting from the vertex with the largest right endpoint until reaching the vertex with the smallest right endpoint. Therefore; at the step in which it labels the vertex $v_i$, all the vertices in $\{v_1,v_2,\ldots, v_{i-1}\}$ remains unlabeled. Thus it is sufficient to investigate those vertices having larger right endpoints among the first and the second neighbors of $v_i$, in order to compute the number of forbidden labels for $v_i$. Hence we may simply consider the vertex $v_1$ in this manner, while computing the forbidden labels. In the second stage of the algorithm, it labels the remaining unlabeled vertices each of which has degree one in $G$, in an arbitrary order.  As it can be inferred from the proof of the next theorem, the first stage of Algorithm~\ref{algo:alg2} enables us to put the parameter $\mu$ into use, by excluding the leaf-vertices.  One may easily observe that Algorithm~\ref{algo:alg2} has the same running time as Algorithm~\ref{algo:alg1}.

\begin{theorem}\label{label:intk}
If an ordering $\sigma=(v_1,v_2,\ldots,v_n)$ of the vertices of an interval $k$-graph $G$ such that $r(v_1)\leq r(v_2) \leq \ldots \leq r(v_n)$ in an interval $k$-representation of $G$ is given as the input, then Algorithm~\ref{algo:alg2} produces an $L(p,q)$-labeling of $G$ with a span at most 
\begin{equation*}
\max\{2(p+q-1)\Delta-4q+2, \, (2p-1)\mu+(2q-1)\Delta-2q+1\} \leq 2(p+q-1)\Delta-2q+1.
\end{equation*}
\end{theorem}

\begin{proof}
Let $G$ be an interval $k$-graph and $C_1,\ldots,C_k$ be its color classes. We may first observe that Algorithm~\ref{algo:alg2} produces an optimal $L(p,q)$-labeling of $G$, if $G$ consists of disjoint edges. Therefore we may assume that $\Delta(G)\geq 2$ and thus $\mu(G)\geq 1$. 

Assume that, in the first stage, the non-leaf vertices from the set $\{v_{i+1},v_{i+2},\newline \ldots,v_n\}$ have been labeled by the Algorithm~\ref{algo:alg2}. We now compute the number of forbidden labels for the vertex $v_i$. Since all the vertices from $v_1,v_2,\ldots, v_i$ are unlabeled so far, this process is equivalent to counting the forbidden labels for the vertex $v_1$ (with the assumption that $d_G(v_1)>1$), that is, the vertex with the minimum right endpoint. Now assume without loss of generality that $v_1$ belongs to the color class $C_1$. \bigskip

\textit{Case 1.} $N_G(v_1) \subseteq C_j$ for some $j\neq 1$. If all the neighbors of $v_1$ are leaf vertices, then there is no forbidden label for $v_1$, hence the algorithm assigns the label zero to $v_1$. Thus we may assume that $v_1$ has at least one non-leaf (labeled) neighbor, that is, $N_G(v_1)\cap S\neq \emptyset$. Let $z,w\in N_G(v_1)$ be the vertices such that $r(z)=\min \{r(u): \, u\in N_G(v_1)\cap S\}$ and $r(w)=\max \{r(u): \, u\in N_G(v_1)\}$ (possibly $z=w$). Choose an arbitrary neighbor $y\in N_G(z)\setminus\{v_1\}$ (which exists since $d_G(z)>1$). We first claim that all the labeled neighbors of $v_1$ are also adjacent to $y$ (note that $y$ is not adjacent to unlabeled neighbors of $v_1$, as they are leaf vertices). Recall that $r(v_1)\leq r(y)$ by the choice of $v_1$. It then follows that $[r(v_1),r(z)]\cap I_y \neq \varnothing$. On the other hand, the inclusion $[r(v_1),r(z)]\subseteq I_u$ holds for every $u\in N_G(v_1)\cap S$, by the choice of $z$. Therefore, we have $\emptyset \neq [r(v_1),r(z)]\cap I_y\subseteq I_u\cap I_y$, hence $yu\in E$ for every $u\in N_G(v_1)\cap S$, as claimed. Furthermore, all the second neighbors of $v_1$ are adjacent to $w$, because of the choice of $w$ (see Figure~\ref{fig:case1}). Thus, $v_1$ has at most $m_G(v_1,y)\leq \mu$ labeled neighbors and at most $d_G(w)-1$ second neighbors. Since for each labeled neighbor of $v_1$, there are $2(p-1)+1=2p-1$ labels that are forbidden for $v_1$, and for each labeled second neighbor of $v_1$, there are $2(q-1)+1=2q-1$ labels that are forbidden for $v_1$; totally we have at most $(2p-1)m_G(v_1,y)+(2q-1)(d_G(w)-1)\leq (2p-1)\mu+(2q-1)(\Delta-1)$ labels, which are forbidden for $v_1$.\\
\vspace{-1mm}
\begin{figure}[htb]
\centering     
\begin{tikzpicture}[yscale=0.4,xscale=0.5]
\draw [help lines,dotted, ystep=22] (0,0) grid(11,8);
		\node [above] at (0.8,4) {$I_{v_1}$};
		\node [right] at (7,0.8) {$I_w$};
		\node [right] at (4.25,2) {$I_z$};
        \node [right] at (11.5,7.5) {$C_l$};
	    \node [right] at (11.5,4.5) {$C_1$};
	    \node [right] at (11.5,1.5) {$C_j$};
		\draw [-] (0,3) -- (12,3);
		\draw [-] (0,6) -- (12,6);
		\draw [line width=1mm] (0,2) -- (4.25,2);
			\node [vert] at (0,2) {};
       		 \node [vert] at (4.25,2) {};
		\draw [line width=1mm] (0.5,4) -- (3,4);
		\draw [pattern=checkerboard, pattern color=lightgray] (0.5,3.9) rectangle (3,4.1);
			\node [vert] at (0.5,4) {};
       		 \node [vert] at (3,4) {};
		\draw [line width=1mm] (1.5,1) -- (7,1);
			\node [vert] at (1.5,1) {};
       		 \node [vert] at (7,1) {};
		\draw [line width=1mm] (1,0) -- (3.25,0); 
			\node [vert] at (1,0) {};
       		 \node [vert] at (3.25,0) {};
		\draw [line width=1mm, lightgray] (4,5) -- (5,5);
			\node [vertl] at (4,5) {};
       		 \node [vertl] at (5,5) {};
		\draw [line width=1mm, lightgray] (3.75,8) -- (5.5,8);
			\node [vertl] at (3.75,8) {};
       		 \node [vertl] at (5.5,8) {};
		\draw [line width=1mm, lightgray] (4.75,4) -- (8,4);
			\node [vertl] at (4.75,4) {};
       		 \node [vertl] at (8,4) {};
		\draw [lightgray,fill=white,pattern=north east lines, pattern color=lightgray] (6,1.9) rectangle (10.5,2.1);
			\node [vertl] at (6,2) {};
       		 \node [vertl] at (10.5,2) {};	
		\draw [line width=1mm, lightgray] (6.5,7) -- (9,7);
			\node [vertl] at (6.5,7) {};
       		 \node [vertl] at (9,7) {};
		\draw [lightgray,fill=white,pattern=north east lines, pattern color=lightgray] (7.5,7.9) rectangle (10,8.1);
			\node [vertl] at (7.5,8) {};
       		 \node [vertl] at (10,8) {};
		\draw [lightgray,fill=white,pattern=north east lines, pattern color=lightgray] (8.5,4.9) rectangle (11,5.1);
			\node [vertl] at (8.5,5) {};
       		 \node [vertl] at (11,5) {};
       \draw [line width=1mm] (15,5.5) -- (16,5.5);
       \draw [line width=1mm, lightgray] (15,3.5) -- (16,3.5);
 		\node [right] at (16,5.5) {(first) neighbors of $v_1$};
 		\node [right] at (16,3.5) {second neighbors of $v_1$};
\end{tikzpicture} 
\caption{Case 1: All the neighbors of $v_1$ belong to one particular color class.}
\label{fig:case1}
\end{figure}
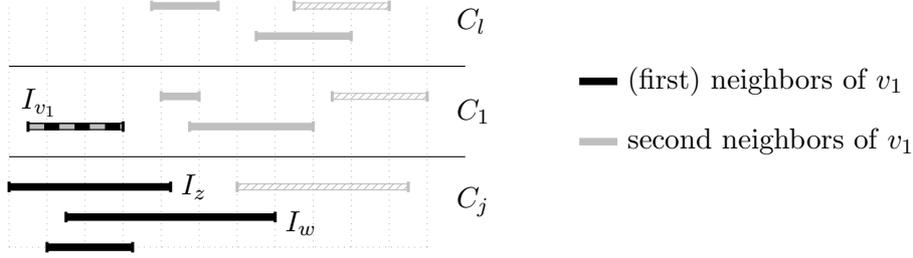
\vspace{-1mm}
\begin{figure}[htb]
\centering 
\begin{tikzpicture}[yscale=0.4,xscale=0.5]
\draw [help lines,dotted, ystep=22] (0,0) grid(11,8);
		\node [right] at (2.8,3.8) {$I_{v_1}$};
		\node [right] at (7.5,2) {$I_c$};
		\node [below] at (4.8,8) {$I_v$};
        \node [right] at (11.5,7.5) {$C_l$};
	    \node [right] at (11.5,4.5) {$C_1$};
	    \node [right] at (11.5,1.5) {$C_j$};
		\draw [-] (0,3) -- (12,3);
		\draw [-] (0,6) -- (12,6);
		\draw [line width=1mm, lightgray] (0,5) -- (5,5);
			\node [vertl] at (0,5) {};
       		 \node [vertl] at (5,5) {};
		\draw [line width=1mm] (0.5,4) -- (3,4);
		\draw [pattern=checkerboard, pattern color=lightgray] (0.5,3.9) rectangle (3,4.1);
			\node [vert] at (0.5,4) {};
       		 \node [vert] at (3,4) {};
 		\draw [line width=1mm] (1,7) -- (3.5,7);
			\node [vert] at (1,7) {};
       		 \node [vert] at (3.5,7) {};
		\draw [line width=1mm] (1.5,1) -- (4,1);
			\node [vert] at (1.5,1) {};
       		 \node [vert] at (4,1) {};
		\draw [line width=1mm] (2,8) -- (5.5,8);
			\node [vert] at (2,8) {};
       		 \node [vert] at (5.5,8) {};
		\draw [line width=1mm] (2.5,2) -- (7.5,2);
			\node [vert] at (2.5,2) {};
       		 \node [vert] at (7.5,2) {};
		\draw [line width=1mm, lightgray] (4.5,1) -- (6.5,1);
			\node [vertl] at (4.5,1) {};
       		 \node [vertl] at (6.5,1) {};
		\draw [line width=1mm, lightgray] (6,4) -- (8.5,4);
			\node [vertl] at (6,4) {};
       		 \node [vertl] at (8.5,4) {};
 		\draw [line width=1mm, lightgray] (6.5,8) -- (10.5,8);
			\node [vertl] at (6.5,8) {};
       		 \node [vertl] at (10.5,8) {};
		\draw [lightgray,fill=white,pattern=north east lines,, pattern color=lightgray] (7,0.9) rectangle (9.5,1.1);
			\node [vertl] at (7,1) {};
       		 \node [vertl] at (9.5,1) {};
		\draw [lightgray,fill=white,pattern=north east lines,, pattern color=lightgray] (8,6.9) rectangle (10,7.1);
			\node [vertl] at (8,7) {};
       		 \node [vertl] at (10,7) {};
		\draw [lightgray,fill=white,pattern=north east lines,, pattern color=lightgray] (9,4.9) rectangle (11,5.1);
			\node [vertl] at (9,5) {};
       		 \node [vertl] at (11,5) {};
       \draw [line width=1mm] (15,5.5) -- (16,5.5);
       \draw [line width=1mm, lightgray] (15,3.5) -- (16,3.5);
 		\node [right] at (16,5.5) {(first) neighbors of $v_1$};
 		\node [right] at (16,3.5) {second neighbors of $v_1$};
\end{tikzpicture} 
\caption{Case 2: $v_1$ has neighbors from at least two color classes.}
\label{fig:case2}
\end{figure}
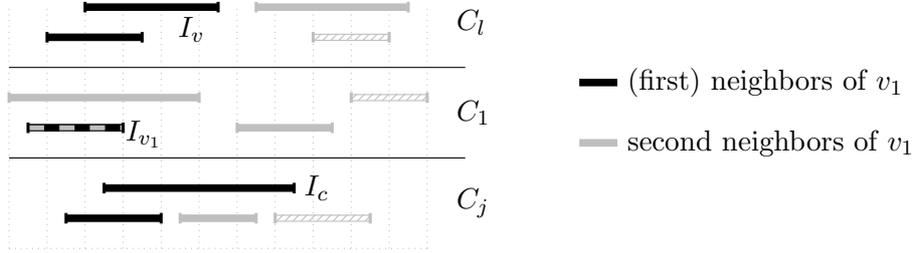
 
\textit{Case 2.} $N_G(v_1)\cap C_{\alpha}\neq \emptyset$ and $N_G(v_1)\cap C_{\beta}\neq \emptyset$ for some $\alpha,\beta \in \{2,3,\ldots,k\}$.
Let $c$ be a vertex such that $r(c)=\max \{r(u): \, u\in N_G(v_1)\}$, and suppose that $c\in C_j$ for some $j\neq 1$. Observe that $N_G(c)$ contains the first and the second neighbors of $v_1$ except for those in $C_j$. Now choose $v\in N_G(v_1)$ such that $r(v)=\max \{r(u): \, u\in N_G(v_1)\setminus C_j\}$. The set $N_G(v)$ contains the first and the second neighbors of $v_1$ in $C_j$ (see Figure~\ref{fig:case2}). Consequently, the set $(N_G(c)\setminus\{v_1\})\cup(N_G(v)\setminus\{v_1\})$ contains all the first and second neighbors of $v_1$ in $G$. If $d_G(v_1)=s$, then the number of the second neighbors of $v_1$ is at most $d_G(c)-1+d_G(v)-1-s=d_G(c)+d_G(v)-s-2$. Thus, the number of labels that are forbidden for $v_1$ is at most
\vspace{-.08cm}
\begin{equation*}
\begin{array}{rl}
&(2p-1)s+(2q-1)(d_G(c)+d_G(v)-s-2)\\
&=2ps-s+(2q-1)(d_G(c)+d_G(v))-2qs+s-4q+2 \\  
&=2(p-q)s+(2q-1)(d_G(c)+d_G(v))-4q+2  \\
&\leq 2(p-q)\Delta+(2q-1)2\Delta-4q+2 \\
&=2(p+q-1)\Delta-4q+2.
\end{array}
\end{equation*}

Let $m$ be the maximum value of the number of forbidden labels gathered from the corresponding two cases, that is, 
\begin{equation*}
m=\max \{2(p+q-1)\Delta-4q+2, (2p-1)\mu+(2q-1)\Delta-2q+1\} \leq 2(p+q-1)\Delta-2q+1. 
\end{equation*}
Then there is at least one available label from the set $\{0,1,\ldots,m\}$ in order to assign to vertex $v_1$. 

For the second stage of the algorithm, assume that $d_G(v_i)=1$. There are at most $(2p-1)+(2q-1)(\Delta-1)$ forbidden labels for the vertex $v_i$, because it has one neighbor and at most $(\Delta-1)$ vertices at distance $2$. However, we have 
\begin{align*}
(2p-1)+(2q-1)(\Delta-1)&<(2p-1)\mu+(2q-1)(\Delta-1)\\
&=(2p-1)\mu+(2q-1)\Delta-2q+1 \leq m.  
\end{align*}

This means that we always have an available label for a degree one vertex. This completes the proof.
\end{proof}

An immediate consequence of the proof of Theorem~\ref{label:intk} is that the upper bound on $\lambda_{p,q}$ becomes  $(2p-1)\mu+(2q-1)\Delta-2q+1$ for interval bigraphs, since there are only two color classes. It is also possible to achieve another upper bound for interval bigraphs in the following manner. Firstly recall that every interval bigraph is chordal bipartite. Furthermore, a strong $T$-elimination ordering of the\linebreak vertices of chordal bipartite graphs was used (in reverse) to get an $L(2,1)$-labeling with a span at most $2\Delta$ in \cite{Panda}. Using the same ordering as an input in Algorithm \ref{algo:alg1}, we may obtain an $L(p,q)$-labeling of a chordal bipartite graph with a span at most $2(2q-1)(\Delta-1)+p$.

We also have the following two corollaries as a result of Theorem~\ref{label:intk}.

\begin{cor}\label{cor:intk}
 If $G$ is an interval $k$-graph, then
\begin{equation*}
\lambda_{2,1}(G)\leq  \max\{4\Delta-2, \Delta+3\mu-1\}\leq 4\Delta-1.
\end{equation*} 
\end{cor}

Notice that Corollary~\ref{cor:intk} implies that Algorithm~\ref{algo:alg2} for $L(2,1)$-labeling of interval $k$-graphs is a $4$-approximation algorithm.

\begin{cor}
$\chi(G^2)\leq \max\{2\Delta-1,\Delta+\mu\} \leq 2\Delta $ for every interval $k$-graph $G$.
\end{cor}


\section{Interval and Circular-Arc Graphs}\label{int}
Regarding the labeling of interval graphs, the first known result is due to Chang and Kuo \cite{chang1996}. In their paper, they show that $\lambda_{2,1}(G)\leq 2\Delta$ when $G$ is an interval graph. Such a bound is generalized for an arbitrary $p\geq 2$ in~\cite{chang2000} by showing that $\lambda_{p,1}(G)\leq p\Delta$ holds. 
In fact, these results are a consequence of a more general result. The
inequality $\lambda_{p,1}(G)\leq p\Delta$ holds for odd-sun-free chordal graphs, which constitutes a superclass of interval graphs. A further generalization in the case of interval graphs appeared in \cite{Calamoneri-N}.

\begin{theorem}[\cite{Calamoneri-N}]\label{cala:int}
$\lambda_{p,q}\leq \max\{p,2q\}\Delta$ holds for every interval graph $G$.
\end{theorem}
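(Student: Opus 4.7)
The plan is to execute a slight modification of Algorithm~\ref{algo:alg1} using the reverse right-endpoint ordering and a carefully restricted pool of candidate labels. I would fix an interval representation of $G$, order the vertices so that $r(v_1)\leq r(v_2)\leq \cdots\leq r(v_n)$, set $M:=\max\{p,2q\}$, and insist that all labels come from the arithmetic progression $L=\{0,M,2M,\ldots,\Delta M\}$. Processing the vertices in reverse order $v_n,v_{n-1},\ldots,v_1$, I would assign to the current $v_i$ the smallest element of $L$ that avoids the usual $L(p,q)$-forbidden ranges around the labels of already-labeled neighbors and second neighbors.

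The crux is a structural claim: when $v_i$ is about to be labeled, the number of already-labeled vertices at graph-distance at most $2$ from $v_i$ is at most $\Delta$. Any labeled neighbor $v_j$ of $v_i$ satisfies $r(v_j)\geq r(v_i)$ and $I_{v_j}\cap I_{v_i}\neq\emptyset$, forcing $l(v_j)\leq r(v_i)\leq r(v_j)$, so the labeled neighbors of $v_i$ all contain the point $r(v_i)$ and together with $v_i$ form a clique. Let $v^*$ be the labeled neighbor of $v_i$ maximizing $r(v^*)$. If $v_j$ is a labeled second neighbor of $v_i$, then $l(v_j)>r(v_i)$, and any common neighbor $v_k$ of $v_i,v_j$ must satisfy $r(v_k)\geq l(v_j)>r(v_i)$, so $v_k$ is itself labeled and therefore a labeled neighbor of $v_i$. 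Consequently $l(v_j)\leq r(v_k)\leq r(v^*)$, while $l(v^*)\leq r(v_i)<l(v_j)$ places $l(v_j)$ inside $I_{v^*}$, giving $v_j\in N(v^*)$. Hence every already-labeled vertex at distance at most $2$ from $v_i$ lies in $N[v^*]\setminus\{v_i\}$, a set of cardinality at most $\Delta$.

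Counting forbidden labels is then immediate: since $M\geq p$, every already-labeled vertex $u$ with $f(u)=tM\in L$ produces $|tM-jM|<p$ only when $t=j$, so a labeled neighbor forbids only its own label within $L$; likewise, because $M\geq 2q$, each labeled second neighbor forbids only its own label. Thus at most $\Delta$ of the $\Delta+1$ values in $L$ are forbidden, $v_i$ always receives a valid label, and the final labeling has span at most $\Delta M=\max\{p,2q\}\Delta$. I expect the structural claim—the funnelling of every labeled second neighbor through the single vertex $v^*$—to be the main obstacle, as it is the one step where the full interval structure (rather than a generic clique-cover bound) is genuinely used; the restriction to the progression $L$ is the standard device that converts such a near-neighborhood bound into an $L(p,q)$-span bound by ensuring each near vertex forbids only one label instead of $2p-1$ or $2q-1$.
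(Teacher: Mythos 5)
Your proof is correct and is essentially the argument this paper uses for its own improved bound: Theorem~\ref{label:int} runs the same greedy on the reverse right-endpoint ordering with labels restricted to multiples of a single step size (Algorithm~\ref{algo:alg3}), and your structural claim funnelling every labeled vertex at distance at most $2$ from $v_i$ through the labeled neighbor $v^*$ of maximum right endpoint is exactly Lemma~\ref{lem:int}. Note, moreover, that your counting never actually needs $M\geq 2q$ --- distinct labels in $L$ already differ by at least $M$, so $M\geq\max\{p,q\}$ suffices --- hence your argument proves the sharper bound $\max\{p,q\}\Delta$ of Theorem~\ref{label:int}, of which the cited bound $\max\{p,2q\}\Delta$ is a weakening.
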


Our next target is to improve the upper bound on $\lambda_{p,q}$ for interval graphs given in Theorem~\ref{cala:int}.

The next algorithm we implement for $L(p,q)$-labeling of interval graphs can also be applied to any graph in general. It will clearly produce an $L(p,q)$-labeling of a given graph with an arbitrary ordering of its vertices. However, unlike many graph classes, it works efficiently for particularly chosen classes as in the example of dually chordal graphs in \cite{Panda}, where the idea of the algorithm is borrowed from.

\begin{algorithm}[htb]
\begin{algorithmic}
\STATE{\textbf{Input:} a graph $G$ with an ordering $\sigma=(v_1,v_2,\ldots,v_n)$ of its vertices;}
\STATE{\textbf{Output:} an $L(p,q)$-labeling $f$ of $G$.}
\STATE set $S:=\emptyset$,
\FORALL {$i$ \textbf{from} $n$ \textbf{to} $1$}
\STATE{ Find the smallest non-negative integer $j$ such that $j\max\{p,q\} \notin \{f(v):  \, v\in N_G(v_i)\cap S\}\cup \{f(u): \; u\in S \text{ and } d_G(v_i,u)=2\} $, }
\STATE{$f(v_i):=j\max\{p,q\}$,}
\STATE{ $S:=S\cup \{v_i\}$,}
\ENDFOR
\STATE{Output($f$)}
\end{algorithmic}
\caption{Improved greedy $L(p,q)$-labeling of $(G,\sigma)$}
\label{algo:alg3}
\end{algorithm}

\begin{lemma}\label{lem:int}
Let $v$ be a vertex of an interval graph $G$ such that the interval $I_v$ has the minimum right endpoint in an interval representation of $G$. If $w$ is a neighbor of $v$ with the maximum right endpoint, then
$N_{G^2}(v) \subseteq N_G[w]\setminus \{v\}$.
\end{lemma}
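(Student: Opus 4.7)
The plan is to show that every vertex $u$ at distance at most $2$ from $v$ (with $u\neq v$) is either $w$ itself or a neighbor of $w$, by manipulating endpoint inequalities. A preliminary observation that I would use throughout: since $r(v)$ is the minimum right endpoint, we have $r(v)\le r(x)$ for every vertex $x$, and in particular $l(x)\le r(x)$ gives $l(v)\le r(v)\le r(x)$, so the inequality $l(v)\le r(x)$ is automatic. Thus $I_x\cap I_v\neq\emptyset$ reduces to the single condition $l(x)\le r(v)$, and conversely $I_x\cap I_v=\emptyset$ forces $l(x)>r(v)$.

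I would then split the argument into the two standard cases. First, suppose $u\in N_G(v)$ and $u\neq w$. From the observation, $l(u)\le r(v)$ and $l(w)\le r(v)$, while $r(v)\le r(u)$ and $r(v)\le r(w)$. Hence the point $r(v)$ lies in both $I_u$ and $I_w$, so $I_u\cap I_w\neq\emptyset$ and $u\in N_G(w)$.

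Second, suppose $d_G(v,u)=2$, so $I_u\cap I_v=\emptyset$ and there exists $y\in N_G(v)\cap N_G(u)$. By the observation, $I_u\cap I_v=\emptyset$ forces $l(u)>r(v)$. Since $y\in N_G(v)$ we have $l(y)\le r(v)<l(u)$, so the intersection $I_y\cap I_u\neq\emptyset$ forces $l(u)\le r(y)$. Now the choice of $w$ as a neighbor of $v$ with maximum right endpoint gives $r(y)\le r(w)$, so $l(u)\le r(w)$. Combined with $l(w)\le r(v)\le r(u)$, this yields $I_u\cap I_w\neq\emptyset$, hence $u\in N_G(w)$. The two cases together give $N_{G^2}(v)\subseteq N_G[w]\setminus\{v\}$.

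The proof is essentially a careful accounting of endpoint inequalities, and no single step should present a genuine obstacle; the only subtlety is remembering that the minimality of $r(v)$ lets one collapse each intersection condition into a one-sided inequality, which is precisely what makes the two choices (the minimum-right-endpoint $v$ and the neighbor $w$ with maximum right endpoint) cooperate.
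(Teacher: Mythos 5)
Your proof is correct and follows essentially the same route as the paper's: both show that every neighbor of $v$ contains the point $r(v)$ (hence meets $I_w$), and both handle a second neighbor $u$ via an intermediate neighbor $y$ of $v$, using $l(u)>r(v)$ and $r(y)\le r(w)$ to force $I_u\cap I_w\neq\emptyset$. Your version merely spells out the endpoint inequalities more explicitly than the paper does.
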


\begin{proof}
Since $r(v)\in I_u$ for all $u\in N_G(v)$, it is clear that the inclusion $N_G(v) \subseteq N_G[w]\setminus \{v\}$ holds. Now, let $u$ be any vertex at distance $2$ from $v$. Then $u$ must be adjacent to some neighbor $z$ of $v$ and satisfies $l(u)>r(v)$ (see Figure~\ref{fig:int}). Then we have that $\emptyset\neq I_u\cap I_z \subseteq I_u\cap I_w$ since $r(z)\leq r(w)$.
\end{proof}

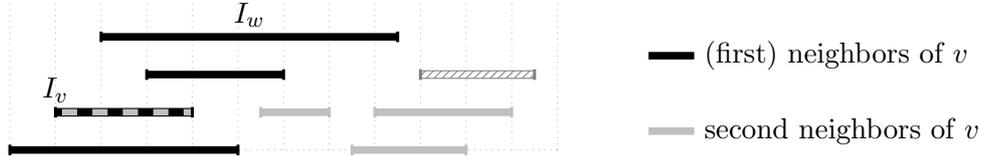
\begin{figure}[htb]
\centering 
\begin{tikzpicture}[yscale=0.5,xscale=0.6]
\draw [help lines,dotted, ystep=22] (0,0) grid(12,4);
		\node [above] at (1,1) {$I_{v}$};
		\node [above] at (5.25,3) {$I_{w}$};
		\draw [line width=1mm] (0,0) -- (5,0); 
			\node [vert] at (0,0) {};
       		 \node [vert] at (5,0) {};
		\draw [line width=1mm] (1,1) -- (4,1);
		\draw [pattern=checkerboard, pattern color=lightgray] (1,0.9) rectangle (4,1.1);
			\node [vert] at (1,1) {};
       		 \node [vert] at (4,1) {};
 		\draw [line width=1mm] (2,3) -- (8.5,3);
			\node [vert] at (2,3) {};
       		 \node [vert] at (8.5,3) {};
		\draw [line width=1mm] (3,2) -- (6,2);
			\node [vert] at (3,2) {};
       		 \node [vert] at (6,2) {};
		\draw [line width=1mm,lightgray] (5.5,1) -- (7,1);
			\node [vertl] at (5.5,1) {};
       		 \node [vertl] at (7,1) {};
		\draw [line width=1mm,lightgray] (7.5,0) -- (10,0);
			\node [vertl] at (7.5,0) {};
       		 \node [vertl] at (10,0) {};
		\draw [line width=1mm,lightgray] (8,1) -- (11,1);
			\node [vertl] at (8,1) {};
       		 \node [vertl] at (11,1) {};
		\draw [gray,fill=white,pattern=north east lines,, pattern color=gray] (9,1.9) rectangle (11.5,2.1);
			\node [vertg] at (9,2) {};
       		 \node [vertg] at (11.5,2) {};
       \draw [line width=1mm] (14,2.5) -- (15,2.5);
       \draw [line width=1mm, lightgray] (14,0.5) -- (15,0.5);
 		\node [right] at (15,2.5) {(first) neighbors of $v$};
 		\node [right] at (15,0.5) {second neighbors of $v$};
\end{tikzpicture} 
\caption{An illustration for the proof of Lemma~\ref{lem:int}.}
\label{fig:int}
\end{figure}

\begin{theorem}\label{label:int}
If an ordering $\sigma=(v_1,v_2,\ldots,v_n)$ of the vertices of an interval graph $G$ such that $r(v_1)\leq r(v_2) \leq \ldots \leq r(v_n)$ in an interval representation of $G$ is given as the input, then Algorithm~\ref{algo:alg3} finds an $L(p,q)$-labeling of $G$ with a span at most $\max\{p,q\}\Delta$.
\end{theorem}

\begin{proof}
Let $m=\max\{p,q\}$ and assume that the vertices $v_{i+1},v_{i+2},\ldots,v_n$ of $G$ has been labeled by the Algorithm~\ref{algo:alg3}. Similar to the proof of Theorem~\ref{label:intk}, it suffices only to consider the vertex $v_1$ instead of $v_i$. By Lemma~\ref{lem:int}, we have $d_{G^2}(v_1)\leq \Delta$, i.e., the number of vertices at distance at most $2$ from $v_1$ is at most $\Delta$. Since the set $\{0,m,2m,\ldots,\Delta m \}$ contains $\Delta+1$ distinct labels, there is an available label for $v_1$. If we assign such an available label to $v_1$, adjacent vertices receive labels at least $m=\max\{p,q\}\geq p$ apart, while the vertices at distance $2$ from each other get labels at least $m=\max\{p,q\} \geq q$ apart. Hence, $\lambda_{p,q}(G)\leq \max\{p,q\} \Delta$ as claimed.
\end{proof}

\begin{cor}
$\chi(G^2)= \Delta+1$ for every interval graph $G$.
\end{cor}

Our final move in this section is to prove the relevant claim of Theorem \ref{mainthm} for circular-arc graphs. Recall that circular-arc graphs are the intersection graphs of arcs on a circle and thus constitute a superclass of interval graphs. In \cite{Calamoneri-N}, Calamoneri \textit{et al.} considered the $L(p,q)$-labeling of a circular-arc graph by dividing it into two parts and labeling them seperately. In more detail, they partition the vertex set of a circular-arc graph $G$ into two sets $S$ and $C$ such that the subgraph $G[S]$ induced by $S$ is an interval graph, while $C$ induces a clique in $G$. Observe that this can be done by excluding a set $C$ of vertices corresponding to arcs having a common point $X$ on the circle (see Figure~\ref{fig:circular}). Obviously $C$ corresponds to a clique in $G$, while taking projection from the point $X$ onto $x$ or $y$ axis provides an interval representation from the remaining arcs corresponding to $S$. Hence, Calamoneri \textit{et al.}~\cite{Calamoneri-N} concluded that $G$ can be $L(p,q)$-labeled by labeling the subgraph induced by $S$ via Theorem~\ref{cala:int} and using additional labels for $C$. The upper bound proposed in \cite[Theorem $6$]{Calamoneri-N} is $\max\{p,2q\}\Delta+p\omega$ . 

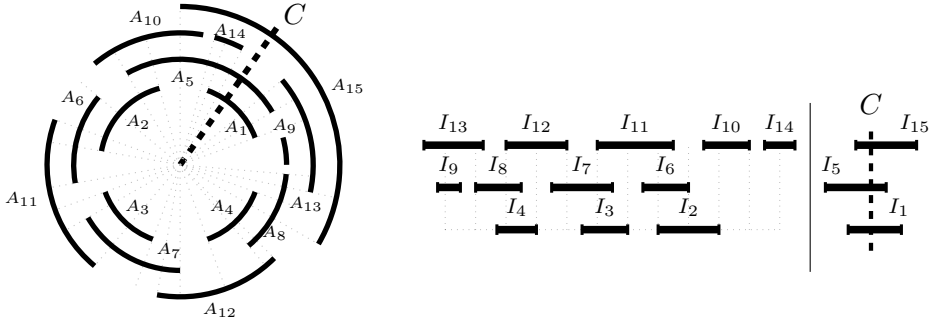
\begin{figure}[htb]
\centering
\parbox{5cm}{
\begin{tikzpicture}[scale=0.7]
\draw [line width=0.8mm,dashed](0,0) -- ++(55:3.3cm)node[pos=.9,above right] {$C$};
\draw [dotted,lightgray](0,0) -- ++(20:2.5cm);%
\draw [dotted,lightgray](0,0) -- ++(30:2.5cm); %
\draw [dotted,lightgray](0,0) -- ++(62:2.5cm); %
\draw [dotted,lightgray](0,0) -- ++(70:2.5cm);%
\draw [dotted,lightgray](0,0) -- ++(75:2.5cm);%
\draw [dotted,lightgray](0,0) -- ++(105:2.5cm);%
\draw [dotted,lightgray](0,0) -- ++(120:2.5cm); %
\draw [dotted,lightgray](0,0) -- ++(140:2.5cm);%
\draw [dotted,lightgray](0,0) -- ++(160:2.5cm);%
\draw [dotted,lightgray](0,0) -- ++(170:2.5cm);%
\draw [dotted,lightgray](0,0) -- ++(190:2.5cm);%
\draw [dotted,lightgray](0,0) -- ++(200:2.5cm);%
\draw [dotted,lightgray](0,0) -- ++(210:2.5cm);%
\draw [dotted,lightgray](0,0) -- ++(230:2.5cm);%
\draw [dotted,lightgray](0,0) -- ++(250:2.5cm);%
\draw [dotted,lightgray](0,0) -- ++(260:2.5cm);%
\draw [dotted,lightgray](0,0) -- ++(270:2.5cm);%
\draw [dotted,lightgray](0,0) -- ++(290:2.5cm);%
\draw [dotted,lightgray](0,0) -- ++(310:2.5cm);%
\draw [dotted,lightgray](0,0) -- ++(315:2.5cm);%
\draw [dotted,lightgray](0,0) -- ++(330:3cm);%
\draw [dotted,lightgray](0,0) -- ++(340:2.5cm);%
\draw [dotted,lightgray](0,0) -- ++(348:3cm);%
\draw [dotted,lightgray](0,0) -- ++(355:2.5cm);%
\draw [dotted,lightgray](0,0) -- ++(400:2.5cm);%
\draw [dotted,lightgray](0,0) -- ++(450:3cm);%
\draw[line width=0.7mm] ([shift=(20:1.5cm)]0,0) arc (20:70:1.5cm) node[midway,below] {\tiny{$A_1$}};
\draw[line width=0.7mm] ([shift=(105:1.5cm)]0,0) arc (105:170:1.5cm)node[near start,pos=.65,right] {\tiny{$A_2$}};
\draw[line width=0.7mm] ([shift=(200:1.5cm)]0,0) arc (200:250:1.5cm)node[near start,right] {\tiny{$A_3$}};
\draw[line width=0.7mm] ([shift=(290:1.5cm)]0,0) arc (290:340:1.5cm)node[near end,left] {\tiny{$A_4$}};
\draw[line width=0.7mm] ([shift=(30:2cm)]0,0) arc (30:120:2cm)node[pos=.65,below] {\tiny{$A_5$}};
\draw[line width=0.7mm] ([shift=(310:2cm)]0,0) arc (310:355:2cm)node[midway,below] {\tiny{$A_8$}};
\draw[line width=0.7mm] ([shift=(0:2cm)]0,0) arc (0:15:2cm)node[pos=.7,above] {\tiny{$A_9$}};
\draw[line width=0.7mm] ([shift=(140:2cm)]0,0) arc (140:190:2cm)node[pos=.01,left] {\tiny{$A_6$}};
\draw[line width=0.7mm] ([shift=(210:2cm)]0,0) arc (210:270:2cm)node[pos=.9,above] {\tiny{$A_7$}};
\draw[line width=0.7mm] ([shift=(348:2.5cm)]0,0) arc (348:400:2.5cm)node[midway, below] {};
\node [below] at (350:2.4cm) {\tiny{$A_{13}$}};
\draw[line width=0.7mm] ([shift=(80:2.5cm)]0,0) arc (80:130:2.5cm)node[midway,above] {\tiny{$A_{10}$}};
\draw[line width=0.7mm] ([shift=(160:2.5cm)]0,0) arc (160:230:2.5cm)node[midway,left] {\tiny{$A_{11}$}};
\draw[line width=0.7mm] ([shift=(260:2.5cm)]0,0) arc (260:315:2.5cm)node[midway,below] {\tiny{$A_{12}$}};
\draw[line width=0.7mm] ([shift=(330:3cm)]0,0) arc (330:450:3cm)node[midway,right] {\tiny{$A_{15}$}};
\draw[line width=0.7mm] ([shift=(62:2.5cm)]0,0) arc (62:75:2.5cm)node[midway,above] {};
\node [above] at (67:2.4cm) {\tiny{$A_{14}$}};
\end{tikzpicture}
\label{fig:diagram}}
\quad
\begin{minipage}{7cm}
\begin{tikzpicture}[xscale=0.4,yscale=0.56]	
\draw [help lines,dotted, ystep=18] (-1,0) grid(10,2);
		\draw [line width=1mm] (0,1) -- (1.5,1)node[midway,above] {\scriptsize{$I_{8}$}};
			\node [vert] at (0,1) {};
       		 \node [vert] at (1.5,1) {};
 		\draw [line width=1mm] (-1.25,1) -- (-0.5,1)node[midway,above] {\scriptsize{$I_{9}$}};
			\node [vert] at (-1.25,1) {};
       		 \node [vert] at (-0.5,1) {};
 		\draw [line width=1mm] (0.7,0) -- (2,0)node[midway,above] {\scriptsize{$I_4$}};
			\node [vert] at (0.7,0) {};
       		 \node [vert] at (2,0) {};
		\draw [line width=1mm] (1,2) -- (3,2)node[midway,above] {\scriptsize{$I_{12}$}};
			\node [vert] at (1,2) {};
       		 \node [vert] at (3,2) {};
		\draw [line width=1mm] (-1.7,2) -- (0.25,2)node[midway,above] {\scriptsize{$I_{13}$}};
			\node [vert] at (-1.7,2) {};
       		 \node [vert] at (0.25,2) {};
		\draw [line width=1mm] (2.5,1) -- (4.5,1)node[midway,above] {\scriptsize{$I_7$}};
			\node [vert] at (2.5,1) {};
       		 \node [vert] at (4.5,1) {};
		\draw [line width=1mm] (3.5,0) -- (5,0)node[midway,above] {\scriptsize{$I_3$}};
			\node [vert] at (3.5,0) {};
       		 \node [vert] at (5,0) {};
		\draw [line width=1mm] (4,2) -- (6.5,2)node[midway,above] {\scriptsize{$I_{11}$}};    
			\node [vert] at (4,2) {};
       		 \node [vert] at (6.5,2) {};   		
		\draw [line width=1mm] (5.5,1) -- (7,1)node[midway,above] {\scriptsize{$I_6$}};  
			\node [vert] at (5.5,1) {};
       		 \node [vert] at (7,1) {};
		\draw [line width=1mm] (6,0) -- (8,0)node[midway,above] {\scriptsize{$I_2$}}; 
			\node [vert] at (6,0) {};
       		 \node [vert] at (8,0) {};
		\draw [line width=1mm] (7.5,2) -- (9,2)node[midway,above] {\scriptsize{$I_{ 10}$}}; 	
			\node [vert] at (7.5,2) {};
       		 \node [vert] at (9,2) {};
		\draw [line width=1mm] (9.5,2) -- (10.5,2)node[midway,above] {\scriptsize{$I_{ 14}$}}; 	
			\node [vert] at (9.5,2) {};
       		 \node [vert] at (10.5,2) {};
 		\draw [line width=1mm] (11.5,1) -- (13.5,1)node[midway,above left] {\scriptsize{$I_{5}$}}; 	
			\node [vert] at (11.5,1) {};
       		 \node [vert] at (13.5,1) {};
		\draw [line width=1mm] (12.25,0) -- (14,0)node[midway,above right] {\scriptsize{$I_1$}}; 	
			\node [vert] at (12.25,0) {};
       		 \node [vert] at (14,0) {};       		 
		\draw [line width=1mm] (12.5,2) -- (14.5,2)node[midway,above right] {\scriptsize{$I_{15}$}};	
			\node [vert] at (12.5,2) {};
       		 \node [vert] at (14.5,2) {};   
		\draw [] (11,-1) -- (11,3); 
        \draw [ultra thick, dashed] (13,-0.5) -- (13,2.5);   
		\node [above] at (13,2.5) {$C$}; 	 
\end{tikzpicture}   
\end{minipage}
\vspace*{-4mm}
\caption{Partition of a circular-arc graph into an interval graph and a clique}
\label{fig:circular}
\end{figure}
\vspace*{-3mm}
\begin{rem}
It is natural to ask whether the upper bound of Theorem~\ref{label:int} can be used to improve the mentioned upper bound for circular-arc graphs. However, as one of the referees pointed out, the proof of Theorem 6 in \cite{Calamoneri-N} turns out to have a gap. For any distinct two vertices $u,v\in S$ with $d_{G[S]}(u,v)>2$, it may be the case that $u$ and $v$ have a mutual neighbor from $C$, implying that $d_G(u,v)=2$. But, an $L(p,q)$-labeling of $G[S]$ may assign even the same label for $u$ and $v$, although they must get labels with difference at least $q$ in an $L(p,q)$-labeling of $G$. Hence, an arbitrary $L(p,q)$-labeling of the induced subgraph $G[S]$ can not be directly extended to that of $G$. For instance, let $G$ be the graph which is obtained from a $8$-path $P_8$ by joining its end vertices with the vertices of a $P_2$ (see Figure~\ref{fig:ex-circ}). If we choose $C=\{x,y\}$ as in the figure, then $G[S]$ is isomorphic to $P_8$. For an $L(2,1)$-labeling of $G[S]$, the vertices $v_1,v_2,\ldots,v_8$ can be assigned with the labels in the order $(0, 2,4,6,1,3,5,0)$, as described in \cite[Lemma 3]{Calamoneri-N}. On the other hand, such a labeling is not admissible for an $L(2,1)$-labeling of $G$, since $d_G(v_1,v_8)=2$. (Here, we note that we set $\Delta:=\Delta(G)=3$ for the labeling of the vertices of $G[S]$. If the desired setting is $\Delta:=\Delta(G[S])$ for the $L(2,1)$-labeling of $G[S]$, then a similar argument can be repeated for the graph constructed from a $P_6$ in the same way.)
\end{rem}

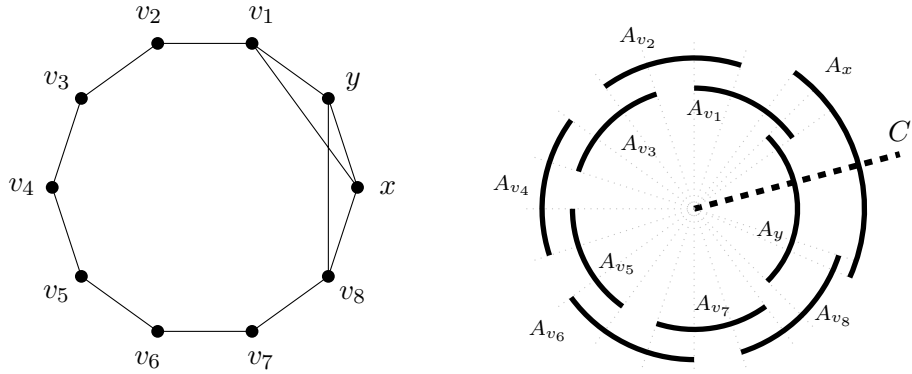
\begin{figure}[htb]
\centering     
\subfigure{
\begin{tikzpicture}[scale=.4]
  \node[
    regular polygon,
    regular polygon sides=10,
    minimum size=4cm,
  ] (a) {};
  \draw[black]
    (a.corner 8) \foreach \i in {9, 10, 1, 2, 3} { -- (a.corner \i) }
    (a.corner 3) \foreach \i in {4, ..., 8} { -- (a.corner \i) }
  ;
  \draw[black]
    (a.corner 1) -- (a.corner 9)
    (a.corner 8) -- (a.corner 10)
  ;
 \foreach \x in {1,2,...,8}
  \fill (a.corner \x) circle[radius=6pt] node[shift={(\x*360/10+35:0.4)}] {$v_{\x}$};
  \fill (a.corner 9) circle[radius=6pt] node[shift={(9*360/10+35:0.4)}] {$x$};
  \fill (a.corner 10) circle[radius=6pt] node[shift={(10*360/10+35:0.4)}] {$y$};
\end{tikzpicture}
}
\hspace*{.5cm}
\subfigure{
\begin{tikzpicture}[scale=0.8]
\draw [line width=0.8mm,dashed](0,0) -- ++(15:3.5cm)node[pos=1,above] {$C$};
\draw [dotted,lightgray](0,0) -- ++(-24:2.8cm);
\draw [dotted,lightgray](0,0) -- ++(36:2.8cm);
\draw [dotted,lightgray](0,0) -- ++(54:2.8cm);
\draw [dotted,lightgray](0,0) -- ++(72:2.8cm);
\draw [dotted,lightgray](0,0) -- ++(90:2.8cm);
\draw [dotted,lightgray](0,0) -- ++(108:2.8cm);
\draw [dotted,lightgray](0,0) -- ++(126:2.8cm);
\draw [dotted,lightgray](0,0) -- ++(144:2.8cm);
\draw [dotted,lightgray](0,0) -- ++(162:2.8cm);
\draw [dotted,lightgray](0,0) -- ++(180:2.8cm);
\draw [dotted,lightgray](0,0) -- ++(198:2.8cm);
\draw [dotted,lightgray](0,0) -- ++(216:2.8cm);
\draw [dotted,lightgray](0,0) -- ++(234:2.8cm);
\draw [dotted,lightgray](0,0) -- ++(252:2.8cm);
\draw [dotted,lightgray](0,0) -- ++(270:2.8cm);
\draw [dotted,lightgray](0,0) -- ++(288:2.8cm);
\draw [dotted,lightgray](0,0) -- ++(306:2.8cm);
\draw [dotted,lightgray](0,0) -- ++(315:2.8cm);
\draw [dotted,lightgray](0,0) -- ++(342:2.8cm);
\draw [dotted,lightgray](0,0) -- ++(405:2.8cm);
\draw[line width=0.7mm] ([shift=(36:2cm)]0,0) arc (36:90:2cm)node[pos=.9,below] {\scriptsize{$A_{v_1}$}};
\draw[line width=0.7mm] ([shift=(108:2cm)]0,0) arc (108:162:2cm)node[midway,below right] {\scriptsize{$A_{v_3}$}};
\draw[line width=0.7mm] ([shift=(180:2cm)]0,0) arc (180:234:2cm)node[pos=.5, right] {\scriptsize{$A_{v_5}$}};
\draw[line width=0.7mm] ([shift=(252:2cm)]0,0) arc (252:306:2cm)node[midway, above] {\scriptsize{$A_{v_7}$}};
\draw[line width=0.7mm] ([shift=(315:1.7cm)]0,0) arc (315:405:1.7cm)node[midway,below left] {\scriptsize{$A_{y}$}};
\draw[line width=0.7mm] ([shift=(-24:2.8cm)]0,0) arc (-24:54:2.8cm)node[pos=.9, above right] {\scriptsize{$A_{x}$}};
\draw[line width=0.7mm] ([shift=(72:2.5cm)]0,0) arc (72:126:2.5cm)node[midway,above left] {\scriptsize{$A_{v_2}$}};
\draw[line width=0.7mm] ([shift=(144:2.5cm)]0,0) arc (144:198:2.5cm)node[midway,left] {\scriptsize{$A_{v_4}$}};
\draw[line width=0.7mm] ([shift=(216:2.5cm)]0,0) arc (216:270:2.5cm)node[pos=.1,below left] {\scriptsize{$A_{v_6}$}};
\draw[line width=0.7mm] ([shift=(288:2.5cm)]0,0) arc (288:342:2.5cm)node[midway,right] {\scriptsize{$A_{v_8}$}};
\end{tikzpicture}
}
\vspace*{-4mm}
\caption{A graph and its circular-arc representation.}
\label{fig:ex-circ}
\end{figure}

However, we are able to establish an upper bound for the $L(p,q)$-labeling of circular-arc graphs. Our proof benefits the same technique, with an additional relabeling procedure on the subgraph $G[S]$ so that the newly created $L(p,q)$-labeling of $G[S]$ can be extended to an $L(p,q)$-labeling of the whole graph $G$. 

When the circular-arc representation of a circular-arc graph is considered, we use clockwise direction for traversing the circle. We denote by $A_v$, the arc corresponding to a vertex $v$ and use the notations $s(A_v)$ and $t(A_v)$ for the beginning and ending point of the arc $A_v$ in a clockwise traversal, respectively. In other words, an arc $A_v$ is defined by traversing the circle from $s(A_v)$ to $t(A_v)$ in the clockwise direction.

\begin{theorem}\label{circarc}
A circular-arc graph $G$ with at least one edge has an $L(p,q)$-labeling with a span at most $3\max\{p,q\}\Delta+p$.
\end{theorem}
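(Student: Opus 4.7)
The plan is to follow the partition-and-label scheme that Calamoneri et al.\ used in the proof of Theorem~\ref{clabel:int}, but to invoke our stronger interval-graph bound Theorem~\ref{label:int} in place of Theorem~\ref{cala:int}. First I would fix an arbitrary point $A$ on the circle carrying the given circular-arc representation of $G$ and split the vertices into
\[
C := \{v \in V(G) : A \in I_v\}, \qquad S := V(G) \setminus C.
\]
Because every pair of arcs in $C$ contains the point $A$, the set $C$ induces a clique in $G$ of size $|C| \le \omega$. Cutting the circle at $A$ and straightening it onto a line turns each arc in $S$ into a genuine closed interval on the real line, so $G[S]$ is an interval graph together with an inherited interval representation.

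Next, I would apply Theorem~\ref{label:int} to $G[S]$ with this representation, obtaining an $L(p,q)$-labeling $f_S$ of $G[S]$ whose image sits inside $\{0, m, 2m, \ldots, m\,\Delta(G[S])\} \subseteq [0, m\Delta]$, where $m := \max\{p,q\}$. Enumerating $C = \{c_1,\ldots,c_k\}$ arbitrarily with $k \le \omega$, I would extend $f_S$ to a labeling $f$ of all of $V(G)$ by
\[
f(s) := f_S(s) \text{ for } s \in S, \qquad f(c_i) := m\Delta + i\,p \text{ for } 1 \le i \le k.
\]
The largest label used is at most $m\Delta + \omega p = \max\{p,q\}\Delta + p\omega$, which is the claimed span; it then remains to check that $f$ satisfies all the $L(p,q)$-labeling constraints.

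The verification splits into three types of pairs. Pairs inside $S$ that are at distance $\le 2$ in $G[S]$ are handled directly by Theorem~\ref{label:int}; pairs inside $C$ are all adjacent and have labels spaced exactly $p$ apart; and for a mixed pair $s\in S$, $c\in C$ the gap $f(c)-f(s)\ge p$ covers the adjacency constraint, and also the distance-$2$ constraint whenever $p\ge q$. The main obstacle I anticipate is the residual case $p<q$, where both a mixed distance-$2$ pair and a pair $s_1,s_2\in S$ at distance $2$ in $G$ realised only through a $c\in C$ require an extra $q-p$ of slack. I expect to resolve it by using two facts together: (i) the labels produced by Algorithm~\ref{algo:alg3} lie on the arithmetic grid $\{0,m,2m,\ldots\}$, so any two of its distinct values automatically differ by at least $m\ge q$, settling the in-$S$-through-$C$ case as soon as one checks such pairs receive distinct values; and (ii) an arc in $S$ lying within $G$-distance $2$ of $C$ must cluster near the cut point $A$ (either near the left or right end of the straightened interval representation), and from this geometric constraint one can show that such an $s$ cannot attain the extremal label $m\Delta$ --- in fact $f_S(s)\le m\Delta-(q-p)$ --- so the required $q$-gap with $f(c_i)$ is preserved.
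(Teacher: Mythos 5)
Your overall plan---cut the circular-arc representation at a point $A$ so that the arcs through $A$ form a clique $C$ of size at most $\omega$ while the remaining arcs induce an interval graph on $S$, label $G[S]$ via Theorem~\ref{label:int}, and stack at most $\omega$ further labels spaced $p$ apart on top for $C$---is exactly how the paper obtains Theorem~\ref{circarc}: it simply reuses the decomposition described before Theorem~\ref{clabel:int} with Theorem~\ref{label:int} substituted for Theorem~\ref{cala:int}, and offers no further verification. The trouble lies in the verification you then attempt, and there the proposal has genuine gaps.

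First, pairs $s_1,s_2\in S$ that are at distance $2$ in $G$ but whose only common neighbors lie in $C$ are not covered by Theorem~\ref{label:int} for \emph{any} $p,q$, not just when $p<q$: Algorithm~\ref{algo:alg3} run on $G[S]$ only separates vertices at distance at most $2$ in $G[S]$, and such a pair can be far apart (even in different components) there. Concretely, let $c$ be an arc through $A$ and let $s_1,s_2$ be two short disjoint arcs meeting $c$ on opposite sides of $A$; after cutting, they sit at opposite ends of the line, $G[S]$ has no edge between them nor a path of length $2$, and the greedy can give both the label $0$, violating the $q$-separation. Your proposed remedy is to check that such pairs receive distinct values, but nothing in the construction guarantees this, and folding the through-$C$ distance-$2$ constraints into the greedy step would break the counting of Lemma~\ref{lem:int} (those extra vertices need not be neighbors of the vertex $w$ used there), so this is not a cosmetic repair. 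Second, for the mixed case with $p<q$ you claim that any $s\in S$ within $G$-distance $2$ of $C$ must cluster near the cut point and consequently satisfies $f_S(s)\le m\Delta-(q-p)$. The clustering claim is false as stated: $s$ may intersect a long arc $z\in S$ far from $A$ while $z$ meets an arc of $C$ near $A$; and even for arcs that do lie near $A$ you give no argument that the greedy avoids assigning them the top label $m\Delta$. So the write-up does not establish the theorem as it stands (for $p\ge q$ because of the first issue, and for $p<q$ because of both); handling the wrap-around distance-$2$ constraints requires an additional idea that neither your proposal nor, it should be said, the paper's one-line derivation makes explicit.
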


\begin{proof}
Let $\{A_v \colon \, v\in V\}$ be the set of arcs in a circular-arc representation of $G$. We first note that if there is an arc covering the whole circle, then it is clear that $|V|=\Delta+1$. If there are two arcs $A_u$ and $A_v$ covering the circle, then $|V|\leq 2\Delta$, since $V=N_G(u)\cup N_G(v)$. In both cases, an $L(p,q)$-labeling of $G$ with a span at most $3\max\{p,q\}\Delta+p$ can be trivially achieved by assigning each vertex of $G$ with a distinct label from the set $\{0,m,2m,\ldots,3\Delta m\}$, where $m=\max\{p,q\}$. We may therefore suppose that there is neither an arc nor two arcs covering the circle in the circular-arc representation of $G$.

Choose a subset $C$ of $V$ such that corresponding arcs have a common point on the circle. Let $r$ be the size of $C$. Since $G$ has at least one edge, we may assume without loss of generality that $C$ is chosen such that $r\geq 2$. Recall that if we set $S=V\backslash C$, then the subgraph $G[S]$ is an interval graph. Pick an $L(p,q)$-labeling of $G[S]$ with a span $\lambda \leq m\Delta$ provided from Theorem~\ref{label:int}, such that the greatest label used is $\lambda$.

Next, we consider the set
$$S':=\{u\in S \colon \, d_{G[S]}(u,v)>2 \text{ and } d_G(u,v)=2 \text{ for some }v\in S\}.$$

If $S'$ is nonempty, then we relabel any $(|S'|-1)$ of the vertices in $S'$, so that any two vertices of $S'$ receive labels which differ by at least $m$. Subsequently, we extend the current $L(p,q)$-labeling of $G[S]$ to an $L(p,q)$-labeling of $G$, by assigning additional labels to the vertices of $C$. It is clear that if $S'=\emptyset$, then we may proceed the proof without a relabeling process and assign the vertices of $C$ with the same additional labels used in the case when $S'\neq\emptyset$. Therefore, it is sufficient to consider only the case $S'\neq\emptyset$. Then, $|S'|\geq 2$ by the choice of $S'$.

Among all the vertices in $C$, let $x,y\in C$ be such that $s(A_x)$ appears first and $t(A_y)$ appears last, while traversing the arcs corresponding to the vertices of $C$ (see, for instance, the arcs $A_5$ and $A_{15}$ in Figure~\ref{fig:circular}). We now have two cases depending on whether $x=y$ or not.
\vspace*{2mm}

\textit{Case 1.} $x\neq y$. It is easy to see that if $u\in S'$, then $u$ is adjacent to at least one of $x$ and $y$. Therefore we have $S'\subseteq N_G(x)\cup N_G(y)$. If we set $k:=m_G(x,y)$, then it follows that $|S'|\leq d_G(x)+d_G(y)-k \leq 2\Delta-k$. Note that any two vertices in $S'$ may also be adjacent. Therefore, we use a distinct label from the set $\{\lambda+m, \lambda+2m,\ldots, \lambda+(2\Delta-k-1)m\}$ for each vertex except one in $S'$. 

In the final step, we label the vertices in $C$. Let $\lambda'=\lambda+(2\Delta-k-1)m$. Here, we remark that the label $\lambda'+p$ may not be admissible for a vertex $c\in C$, because it may be the case that $p<q$ and that $c$ may be at distance $2$ from a vertex (from $S'$) which have the label $\lambda'$. Therefore we assign the labels  $\lambda'+m, \lambda'+m+p, \ldots, \lambda'+m+(r-1)p$ to the vertices of $C$, in an arbitrary order. Such a labeling clearly ensures that the vertices of $C$ receive labels with difference at least $p$ amongst each other. Hence we obtain an $L(p,q)$-labeling of $G$. Since each of the vertices in $C\backslash\{x,y\}$ (if any) is a common neighbor of $x$ and $y$, we have $|C|=r\leq 2+k$. Then, the largest label we use is at most
\begin{align*}
\lambda'+m+(r-1)p & =\lambda+(2\Delta-k-1)m+m+(r-1)p \\
& \leq 3m\Delta-km+(k+1)p=3m\Delta-k(m-p)+p \\ 
& \leq 3m\Delta+p=3\max\{p,q\}\Delta+p.
\end{align*}

\textit{Case 2.} $x=y$. In such a case, let $t$ be the number of the neighbors of $x$ in $C$. Note that $t\geq 1$ by the choice of $C$, and $|C|=t+1$. We may further observe that $|S'|\leq d_G(x)-t\leq \Delta-t$, since all the vertices in $S'$ are adjacent to $x$. Then we use a distinct label from the set $\{\lambda+m, \lambda+2m,\ldots, \lambda+(\Delta-t-1)m\}$ for each vertex except one in $S'$, and let $\lambda'=\lambda+(\Delta-t-1)m$. Finally, we label the vertices of $C$. Similar to the previous case, we may assign the labels  $\lambda'+m, \lambda'+m+p, \ldots, \lambda'+m+tp$ to the vertices of $C$, in an arbitrary order. Thus, the largest label we use is at most
\begin{align*}
\lambda'+m+tp & =\lambda+(\Delta-t-1)m+m+tp\\
&  \leq 2m\Delta-tm-m+m+tp \\
& =2m\Delta-t(m-p)  \leq 2m\Delta-(m-p) \\  
& = m(2\Delta-1)+p=\max\{p,q\}(2\Delta-1)+p.
\end{align*}

Hence the result follows, since $\max\{p,q\}(2\Delta-1)+p  \leq 3\max\{p,q\}\Delta+p$ for each values of $p$, $q$ and $\Delta$.
\end{proof}

Theorem~\ref{circarc} has an immediate consequence that $\lambda_{2,1}(G) \leq 6\Delta+2$ holds for a circular-arc graph $G$. On the other hand, we have the following upper bound on the chromatic number of the square of circular-arc graphs. 

\begin{cor}\label{thm:square-circular}
$\chi(G^2) \leq 3\Delta+2$ for every circular-arc graph $G$.
\end{cor}


\section{Permutation (Interval Containment) Graphs}\label{perm}

Permutation graphs constitute a well-studied graph class. Even though they admit various representations, we here consider the interval representation of these graphs. In that language, a permutation graph is a graph with interval representation on the real line such that two vertices are adjacent if and only if the corresponding intervals are comparable with respect to the inclusion.

Bodlaender \textit{et al.}. \cite{Bodlaender} gave an $O(n\Delta)$ time algorithm to show that $\lambda_{2,1}(G) \leq 5\Delta-2 $ for every permutation graph $G$. Moreover, Paul \textit{et al.}~\cite{Paul} improve this bound to $\max\{4\Delta-2, \, 5\Delta-8\}$ by appealing to the algorithm given in \cite{chang1996}. Notice that these results were obtained by considering matching diagrams of permutation graphs. We here provide an upper bound in the general case of $L(p,q)$-labeling of permutation graphs and improve the previously known results on $L(2,1)$-labeling.

\begin{theorem}\label{thm:perm}
If an ordering $\sigma=(v_1,v_2,\ldots,v_n)$ of the vertices of a permutation graph $G$ such that $r(v_1)\leq r(v_2) \leq \ldots \leq r(v_n)$ in an interval representation of $G$ is given as the input, then Algorithm~\ref{algo:alg1} produces an $L(p,q)$-labeling of $G$ with a span at most $2(p+q-1)\Delta-2q+1$.
\end{theorem}

\begin{proof}
Assume that the vertices $v_{i+1},v_{i+2},\ldots,v_n$ of $G$ have been labeled by the Algorithm~\ref{algo:alg1}. Since all these labeled vertices corresponds to intervals with right endpoints no less than $r(v_i)$, counting the forbidden labels for the vertex $v_i$ is equivalent to counting the forbidden labels for the vertex $v_1$. Therefore, once again, consider the vertex $v_1$ instead of $v_i$ as in the previous proofs. Now we may observe that if $v_1$ has no neighbor $w$ with $r(v_1)<r(w)$, then $v_1$ has no second neighbor. In such a case, there are at most $(2p-1)d_G(v_1)\leq (2p-1)\Delta $ forbidden labels for $v_1$. Therefore we may further assume that $v_1$ has at least one neighbor whose interval has a larger right endpoint. Let $v_{i_1},v_{i_2},\ldots,v_{i_s}$ be such neighbors of $v_1$ in the increasing order with respect to their right endpoints. By the choice of $v_1$, we have $I_{v_1}\subseteq I_{v_{i_k}}$ for each $1\leq k \leq s $.

\begin{claim*} 
$N_G(v_{i_j})\setminus N_G[v_1]\subseteq N_G(v_{i_s})$ for every $j$ with $1\leq j<s$.
\end{claim*}

\begin{proof}  Let $v_{i_j}$ be a neighbor of $v_1$ such that $1\leq j<s$, and pick a vertex $u\in N_G(v_{i_j})\setminus N_G[v_1]$. Since $r(v_1)\leq r(u)$ by the choice of $v_1$, we first have $l(v_1)<l(u)$. Secondly, we need to show that $r(u)\leq r(v_{i_s})$. Because of the facts $ I_{v_1}\subseteq I_{v_{i_j}}$ and $ I_{v_1}\not\subset I_u$, we have $ I_u\subseteq I_{v_{i_j}}$, which means that $r(u)\leq r(v_{i_j})$. Since we also have $r(v_{i_j})< r(v_{i_s})$, we conclude that $r(u)< r(v_{i_s})$. Combining these, we obtain $l(v_{i_s})\leq l(v_1)<l(u)<r(u)< r(v_{i_s})$, which yields $uv_{i_s}\in E$. This completes the proof of the claim.
\end{proof}

We remark that the above claim implies that all the vertices at distance $2$ from $v_1$ are adjacent to $v_{i_s}$, which means that the number of second neighbors of $v_1$ is at most $d_G(v_{i_s})-1\leq \Delta-1$ (see Figure~\ref{fig:perm}).

\begin{figure}[htb]
\centering    
\begin{tikzpicture}[yscale=0.5,xscale=0.6]
\draw [help lines,dotted, ystep=22] (0,0) grid(9,5);
	\node [left] at (1.5,4.9) {$I_{v_1}$};
		\draw [line width=1mm] (0,1) -- (6,1); 
			\node [vert] at (0,1) {};
       		 \node [vert] at (6,1) {};
		\draw [line width=1mm] (0.5,3) -- (4,3);  
			\node [vert] at (0.5,3) {};
       		 \node [vert] at (4,3) {};
 		\draw [line width=1mm] (1,0) -- (7.5,0);
			\node [vert] at (1,0) {};
       		 \node [vert] at (7.5,0) {};
		\draw [line width=1mm] (1.5,5) -- (3,5); 
		\draw [pattern=checkerboard, pattern color=lightgray] (1.5,4.9) rectangle (3,5.1);
			\node [vert] at (1.5,5) {};
       		 \node [vert] at (3,5) {};
		\draw [line width=1mm,lightgray] (2,2) -- (3.5,2);
			\node [vertl] at (2,2) {};
       		 \node [vertl] at (3.5,2) {};
		\draw [line width=1mm,lightgray] (2.5,4) -- (5.5,4);
			\node [vertl] at (2.5,4) {};
       		 \node [vertl] at (5.5,4) {};
		\draw [gray,fill=white,pattern=north east lines, pattern color=gray] (3.75,4.9) rectangle (9,5.1);
			\node [vertg] at (3.75,5) {};
       		 \node [vertg] at (9,5) {};       		 
		\draw [gray,fill=white,pattern=north east lines, pattern color=gray] (4.5,2.9) rectangle (8,3.1);
			\node [vertg] at (4.5,3) {};
       		 \node [vertg] at (8,3) {};
		\draw [line width=1mm,lightgray] (5,2) -- (6.5,2);
			\node [vertl] at (5,2) {};
       		 \node [vertl] at (6.5,2) {};
		\draw [gray,fill=white,pattern=north east lines, pattern color=gray] (7,0.9) rectangle (8.5,1.1);
			\node [vertg] at (7,1) {};
       		 \node [vertg] at (8.5,1) {};
       \draw [line width=1mm] (11,4) -- (12,4);
       \draw [line width=1mm, lightgray] (11,2) -- (12,2);
 		\node [right] at (12,4) {(first) neighbors of $v_1$};
 		\node [right] at (12,2) {second neighbors of $v_1$};
\end{tikzpicture} 
\caption{An illustration for the proof of Theorem~\ref{thm:perm}.}
\label{fig:perm}
\end{figure}
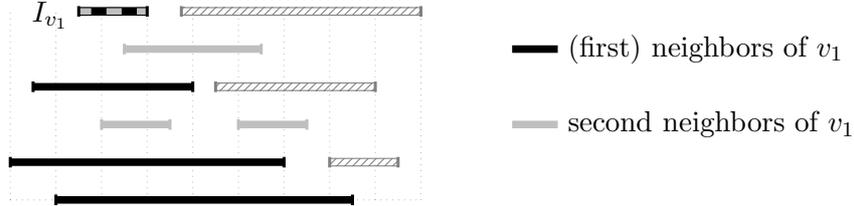

We now compute the number of forbidden labels for $v_1$. For each neighbor of $v_1$, there are $2(p-1)+1=2p-1$ labels that are forbidden for $v_1$, and for each second neighbor of $v_1$, there are $2(q-1)+1=2q-1$ labels that are forbidden for $v_1$. So, in total, we have at most 
\begin{align*}
&(2p-1)d_G(v_1)+(2q-1)(d_G(v_{i_s})-1)\\
&\leq (2p-1)\Delta+(2q-1)(\Delta-1)\\
&\leq 2(p+q-1)\Delta-2q+1 
\end{align*}
labels which are forbidden for $v_1$. This completes the proof.
\end{proof}

\begin{cor}
 $\lambda_{2,1}(G) \leq 4\Delta-1 $ for every permutation graph $G$. Thus, Algorithm~\ref{algo:alg1} for $L(2,1)$-labeling of permutation graphs is a $4$-approximation algorithm.
\end{cor}

\begin{cor}
$\chi(G^2) \leq 2\Delta $ for every permutation graph $G$.
\end{cor}

\section{Cointerval Graphs (Comparability Graphs of Interval Orders)}\label{coint}
In this section, we first observe the equivalence between cointerval graphs and the comparability graphs of interval orders. Then, having the advantages of such an equivalence, we complete the proof of Theorem~\ref{mainthm} by verifying the claimed upper bound on $\lambda_{p,q}$ for cointerval graphs. We refer readers to \cite{trotter2001} for the terminology of partially ordered sets. 

In particular, the set of minimal elements of $P$ is denoted by $\textnormal{Min}(P)$. The \emph{comparability graph} of a poset $P=(X,\leq_P)$ is the graph $G(P)$ (or simply $G$) on the same set $X$ such that two vertices $x,y\in X$ are adjacent in $G(P)$ if and only if $x$ and $y$ are comparable in $P$. A poset $P=(X,\leq_p)$ is said to be an \emph{interval order} if there exists an interval representation $\mathcal{I}_P=\{I_x: \, x\in X\}$ of $P$ such that $x<_Py$ if and only if $r(I_x)<l(I_y)$. A graph is called a \emph{cointerval graph} if its complement is an interval graph. $2K_2$ denotes the graph consisting of two disjoint edges. It is known (see, \cite{peled1995}) that a graph $G$ is a cointerval graph if and only if it is a comparability graph with no induced $2K_2$. 

One may also observe that a cointerval graph $G$ admits an interval representation such that $uv\in E(G)$ if and only if $I_u\cap I_v= \emptyset$, since $\overline{G}$ is an interval graph. Therefore it is straightforward that $G$ is the comparability graph of an interval order $P$ on the set $V(G)$ such that $u<_P v$ if $r(I_u)<l(I_v)$ for $u,v\in V(G)$. Conversely, if $G$ is the comparability graph of an interval order $P$, then $P$ has an interval representation such that two elements are comparable in $P$ if and only if corresponding intervals do not intersect. It then follows that $G$ is a cointerval graph. Thus, we have the following equivalence.

\begin{fact}
A graph $G$ is a cointerval graph if and only if $G$ is the comparability graph of an interval order.
\end{fact}

\begin{cor}
Let $G$ be a graph. Then the following statements are equivalent:
\begin{enumerate}[label=\normalfont(\roman*),itemsep=0em]
\item $G$ is a cointerval graph.
\item $G$ is the comparability graph of an interval order.
\item $G$ is a $2K_2$-free comparability graph.
\end{enumerate}
\end{cor}

We are now ready to prove the relevant claim of Theorem~\ref{mainthm} for cointerval graphs. We note that by an interval representation of a cointerval graph $G$, we mean the interval representation of the corresponding interval order whose comparability graph is $G$. 

\begin{theorem}\label{thm:coint}
Algorithm~\ref{algo:alg2} produces an $L(p,q)$-labeling of a cointerval graph $G$ with a span at most $(2p-1)\Delta+(2q-1)(\mu-1)\leq 2(p+q-1)\Delta-2q+1$.
\end{theorem}

\begin{proof}
Let $G$ be a cointerval graph, that is, the comparability graph of an interval order $P$, and let $v_1,v_2,\ldots,v_n$ be its vertices such that $r(v_1)\leq r(v_2) \leq \ldots \leq r(v_n)$ in the interval representation. As in the proof of Theorem~\ref{label:intk}, we assume that $\Delta(G)\geq 2$ and $\mu(G)\geq 1$. Suppose that the non-leaf vertices from the set $\{v_{i+1},v_{i+2},\ldots,v_n\}$ of $G$ have been labeled by the first stage of Algorithm~\ref{algo:alg2}. As earlier, consider the vertex $v_1$ instead of $v_i$. Clearly, $v_1$ is a minimal element in $P$. Observe also that we have $v_1<_P u$, and thus $v_1u\in E$ for every non-minimal element $u$ in $P$ by the choice of $v_1$. If $v_1$ is the only minimal element of $P$, then this means that $v_1$ has no second neighbor. In such a case, the number of forbidden labels for $v_1$ is at most $(2p-1)d(v_1)\leq (2p-1)\Delta$. So, we may assume that $P$ has at least two minimal elements. A similar argument applies if $v_1$ has no labeled second neighbor. Thus we may further assume that $v_1$ has at least one labeled second neighbor, which implies that $\textnormal{Min}(P)\cap S\neq\emptyset$. Let $w$ be a minimal element such that $r(w)=\max\{r(u): \, u\in \textnormal{Min}(P)\cap S\}$. Since $d_G(w)\geq 2$, the vertex $w$ has at least two neighbors in $G$, say $a$ and $b$, so that the comparabilities  $w<_Pa$ and $w<_Pb$ hold. Observe that the set $N_G(a)\cap N_G(b)$ contains all the labeled minimal elements (labeled second neighbors of $v_1$). However, this implies that the number of labeled second neigbors of $v_1$ is at most $m_G(a,b)-1$. Note that the set $N_G(a)\cap N_G(b)$ does not contain unlabeled second neighbors (if any) of $v_1$, as they are leaf vertices of $G$ (see Figure~\ref{fig:coint}).

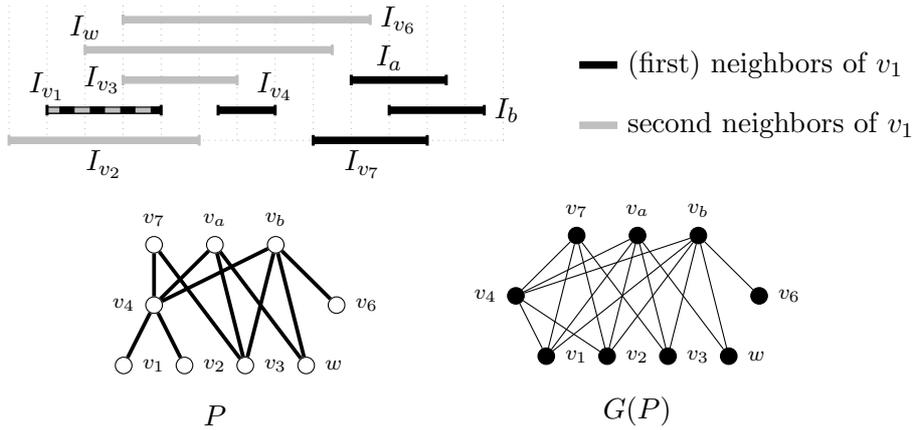
\begin{figure}[htb]
\centering     
\begin{tikzpicture}[yscale=0.4,xscale=0.5]
\draw [help lines,dotted, ystep=22] (0,0) grid(13,4.5);
		\node [above] at (1,1) {$I_{v_1}$};
		\node [right] at (9.5,4) {$I_{v_6}$};
		\node [above] at (2,3) {$I_{w}$};
		\node [right] at (12.5,1) {$I_{b}$};		
		\node [above] at (10,2) {$I_{a}$};
		\node [left] at (3.2,2) {$I_{v_3}$};
		\node [below] at (2.5,0) {$I_{v_2}$};
		\node [above] at (7,1) {$I_{v_4}$};		
		\node [below] at (9.3,0) {$I_{v_7}$};
		\draw [line width=1mm,lightgray] (0,0) -- (5,0); 
			\node [vertl] at (0,0) {};
       		 \node [vertl] at (5,0) {};
		\draw [line width=1mm] (1,1) -- (4,1);
		\draw [pattern=checkerboard, pattern color=lightgray] (1,0.9) rectangle (4,1.1);
			\node [vert] at (1,1) {};
       		 \node [vert] at (4,1) {};
 		\draw [line width=1mm,lightgray] (2,3) -- (8.5,3); 
			\node [vertl] at (2,3) {};
       		 \node [vertl] at (8.5,3) {};
 		\draw [line width=1mm,lightgray] (3,4) -- (9.5,4);
			\node [vertl] at (3,4) {};
       		 \node [vertl] at (9.5,4) {};
		\draw [line width=1mm,lightgray] (3,2) -- (6,2); 
			\node [vertl] at (3,2) {};
       		 \node [vertl] at (6,2) {};
		\draw [line width=1mm] (5.5,1) -- (7,1);  
			\node [vert] at (5.5,1) {};
       		 \node [vert] at (7,1) {};
		\draw [line width=1mm] (8,0) -- (11,0);
			\node [vert] at (8,0) {};
       		 \node [vert] at (11,0) {};
		\draw [line width=1mm] (10,1) -- (12.5,1); 
			\node [vert] at (10,1) {};
       		 \node [vert] at (12.5,1) {};
		\draw [line width=1mm] (9,2) -- (11.5,2);
			\node [vert] at (9,2) {};
       		 \node [vert] at (11.5,2) {};
       \draw [line width=1mm] (15,2.5) -- (16,2.5);
       \draw [line width=1mm, lightgray] (15,0.5) -- (16,0.5);
 		\node [right] at (16,2.5) {(first) neighbors of $v_1$};
 		\node [right] at (16,0.5) {second neighbors of $v_1$};
\end{tikzpicture} 
\vspace*{7mm}
\subfigure{
\begin{tikzpicture}[scale=.8]
\node [nodel] at (0,0) (v1) [label=right:\scriptsize{$v_1$}]  {};
\node [nodel] at (1,0) (v2) [label=right:\scriptsize{$v_2$}] {};
\node [nodel] at (2,0) (v3) [label=right:\scriptsize{$v_3$}] {};
\node [nodel]at (3,0) (v5) [label=right:\scriptsize{$w$}] {};
\node [nodel] at (0.5,1) (v4) [label=left:\scriptsize{$v_4$}] {}
	edge [line width=0.5mm] (v1)
	edge [line width=0.5mm] (v2);
\node [nodel]at (0.5,2) (v6) [label=above:\scriptsize{$v_7$}] {}
	edge [line width=0.5mm] (v4)
	edge [line width=0.5mm] (v3);
\node [nodel]at (1.5,2) (v7) [label=above:\scriptsize{$v_a$}] {}
	edge [line width=0.5mm] (v4)
    edge [line width=0.5mm] (v5)
	edge [line width=0.5mm] (v3);
\node [nodel] at (2.5,2) (v8) [label=above:\scriptsize{$v_b$}] {}
	edge [line width=0.5mm] (v4)
    edge [line width=0.5mm] (v5)
	edge [line width=0.5mm] (v3);
\node [nodel] at (3.5,1) (v9) [label=right:\scriptsize{$v_6$}] {}
	edge [line width=0.5mm] (v8);
	\node [below] at (1.5,-0.5) {$P$};
\end{tikzpicture} 
}
\hspace*{.5cm}
\subfigure{
\begin{tikzpicture}[scale=.8]
\node [nod2] at (0,0) (v1) [label=right:\scriptsize{$v_1$}]  {};
\node [nod2] at (1,0) (v2) [label=right:\scriptsize{$v_2$}] {};
\node [nod2] at (2,0) (v3) [label=right:\scriptsize{$v_3$}] {};
\node [nod2]at (3,0) (v5) [label=right:\scriptsize{$w$}] {};
\node [nod2]at (-0.5,1) (v4) [label=left:\scriptsize{$v_4$}] {}
	edge [] (v1)
	edge [] (v2);
\node [nod2]at (0.5,2) (v6) [label=above:\scriptsize{$v_7$}] {}
	edge [] (v1)
	edge [] (v2)
	edge [] (v4)
	edge [] (v3);
\node [nod2]at (1.5,2) (v7) [label=above:\scriptsize{$v_a$}] {}
	edge [] (v1)
	edge [] (v2)
	edge [] (v4)
    edge [] (v5)
	edge [] (v3);
\node [nod2]at (2.5,2) (v8) [label=above:\scriptsize{$v_b$}] {}
	edge [] (v1)
	edge [] (v2)
	edge [] (v4)
    edge [] (v5)
	edge [] (v3);
\node [nod2]at (3.5,1) (v9) [label=right:\scriptsize{$v_6$}] {}
	edge [] (v8);
\node [below] at (1.5,-0.5) {$G(P)$};
\end{tikzpicture} 
}
\vspace*{-6mm}
\caption{An illustration for the proof of Theorem~\ref{thm:coint}.}
\label{fig:coint}
\end{figure}

For each labeled neighbor of $v_1$, there are $2p-1$ labels that are forbidden for $v_1$; while for each labeled second neighbors of $v_1$, there are $2q-1$ labels that are forbidden for $v_1$. Therefore, we have totally at most 
$$(2p-1)d(v_1)+(2q-1)(m_G(a,b)-1)\leq (2p-1)\Delta+(2q-1)(\mu-1)$$
labels which are forbidden for $v_1$. This means that there exists an available label from the set $\{0,1,2,\ldots,(2p-1)\Delta+(2q-1)(\mu-1)\}$ for $v_1$. 

The proof for the second stage of the algorithm can be proceeded in the same way as in the proof of Theorem~\ref{label:intk}, independently from the ordering of leaf vertices. This completes the proof.
\end{proof}

As a result of Theorem~\ref{thm:coint}, the bound $\lambda_{2,1}(G) \leq 3\Delta+\mu-1\leq 4\Delta-1 $ holds for cointerval graphs. On the other hand,  Theorem~\ref{thm:coint} also yields the following  upper bound on the chromatic number of the square of cointerval graphs. 

\begin{cor}
$\chi(G^2) \leq \Delta+\mu $ for every cointerval graph $G$.
\end{cor}

\section{Concluding Remarks}

We have studied the $L(p,q)$-labeling problem on graphs with interval and circular-arc representations. We have provided upper bounds on $\lambda_{p,q}$ of interval $k$-graphs, circular-arc, permutation and cointerval graphs. In order to achieve that, we have performed simple yet natural greedy algorithms. While performing these algorithms, we have utilized linear ordering of the vertices of these graphs obtained from their interval representations on $\mathbb{R}$. 
The upper bounds given for interval $k$-graphs, permutation graphs and cointerval graphs are the first known results, while the upper bound for interval graphs refine the previous results, in the general case of $L(p,q)$-labeling. On the other hand, we improve the best known upper bound on $\lambda_{2,1}$ of permutation graphs. All the upper bounds we obtained for $L(2,1)$-labeling of these graphs are linear in terms of $\Delta$. As a consequence of the labeling we have carried out, we have obtained tight upper bounds on the chromatic number of the square of graphs with interval representations. Note that the complete bipartite graph $K_{r,r}$ for each $r\geq 2$ attains the upper bounds, as it is contained in interval $k$-graphs, permutation graphs and cointerval graphs.

\begin{center}
\textbf{Acknowledgments}
\end{center}

\noindent
We would like to thank Prof. Yusuf Civan for his invaluable comments and generous encouragement. We are grateful to the anonymous referees for their careful reading and constructive comments which greatly improved the manuscript. They pointed out several errors in the earlier versions of this paper and have contributed significantly to the overall improvement.

We also would like to thank one of the referees for pointing out that the proof of a result on circular-arc graphs in an earlier version contained a gap and that the gap originally appears in the proof of Theorem 6 of \cite{Calamoneri-N}, which our proof had relied on (and the proof of Theorem~\ref{circarc} is also based on).

\small

\end{document}